\providecommand{\tabularnewline}{\\}
\numberwithin{equation}{section}
\numberwithin{figure}{section}
\theoremstyle{plain}
\newtheorem{thm}{\protect\theoremname}
  \theoremstyle{plain}
  \newtheorem{prop}[thm]{\protect\propositionname}
  \theoremstyle{definition}
  \newtheorem{example}[thm]{\protect\examplename}
  \theoremstyle{plain}
  \newtheorem{lem}[thm]{\protect\lemmaname}
  \theoremstyle{plain}
  \newtheorem{cor}[thm]{\protect\corollaryname}
  \theoremstyle{remark}
  \newtheorem{rem}[thm]{\protect\remarkname}
  \providecommand{\corollaryname}{Corollary}
  \providecommand{\examplename}{Example}
  \providecommand{\lemmaname}{Lemma}
  \providecommand{\propositionname}{Proposition}
  \providecommand{\remarkname}{Remark}
\providecommand{\theoremname}{Theorem}
\begin{document}

\title{Antichain cutsets of strongly connected posets}

\author{Stephan Foldes and Russ Woodroofe}

\thanks{\hspace{-0.5cm}%
\begin{tabular}{p{0.76\columnwidth}>{\centering}p{0.1\columnwidth}}
Acknowledgement: The work of the first-named author has been co-funded
by Marie Curie Actions, and supported by the National Development
Agency (NDA) of Hungary and the Hungarian Scientific Research Fund
(OTKA) within a project hosted by the University of Miskolc, Department
of Analysis. The work was also completed as part of the TAMOP-4.2.1.B-10/2/KONV-2010-0001
project at the University of Miskolc, with support from the European
Union, co-financed by the European Social Fund.  &
\vspace{-0.2cm}
$\quad\,\,\,\,$\includegraphics[height=0.9cm]{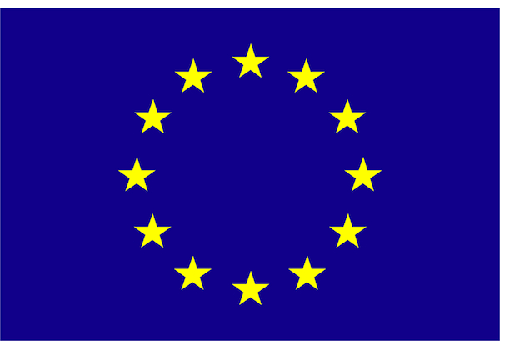}\vspace{0.1cm}
\includegraphics[height=1.2cm]{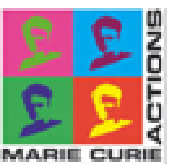}$\quad$\includegraphics[height=1.2cm]{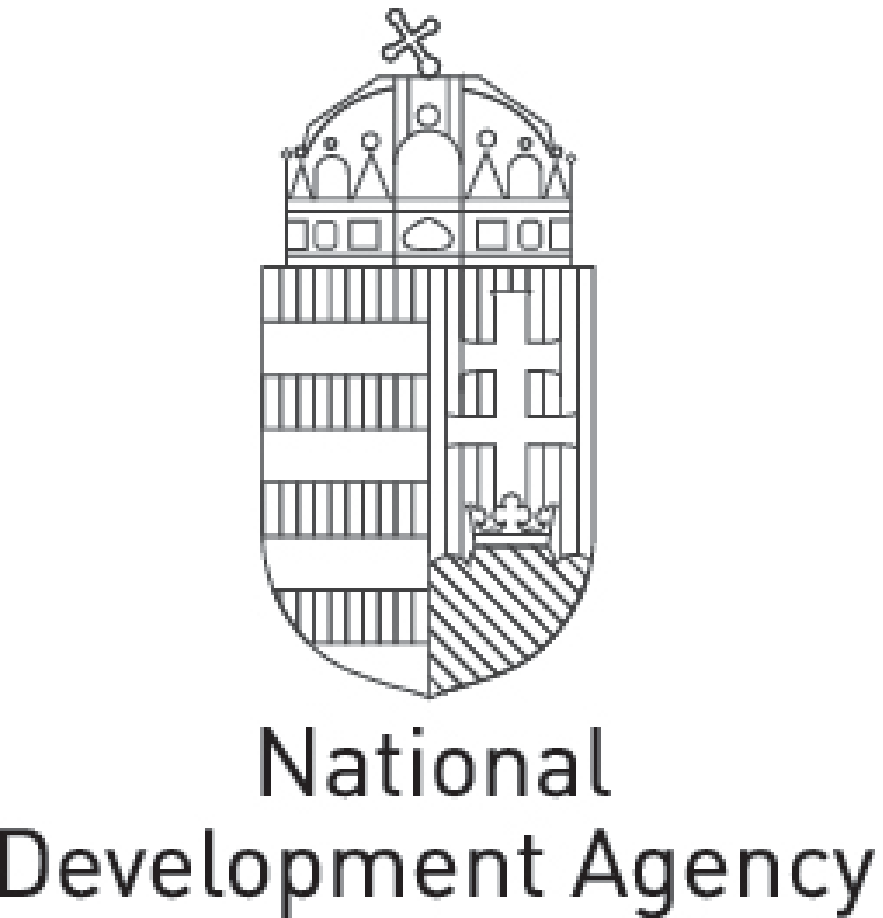}\\
\vspace{0.2cm}
~\includegraphics[height=1cm]{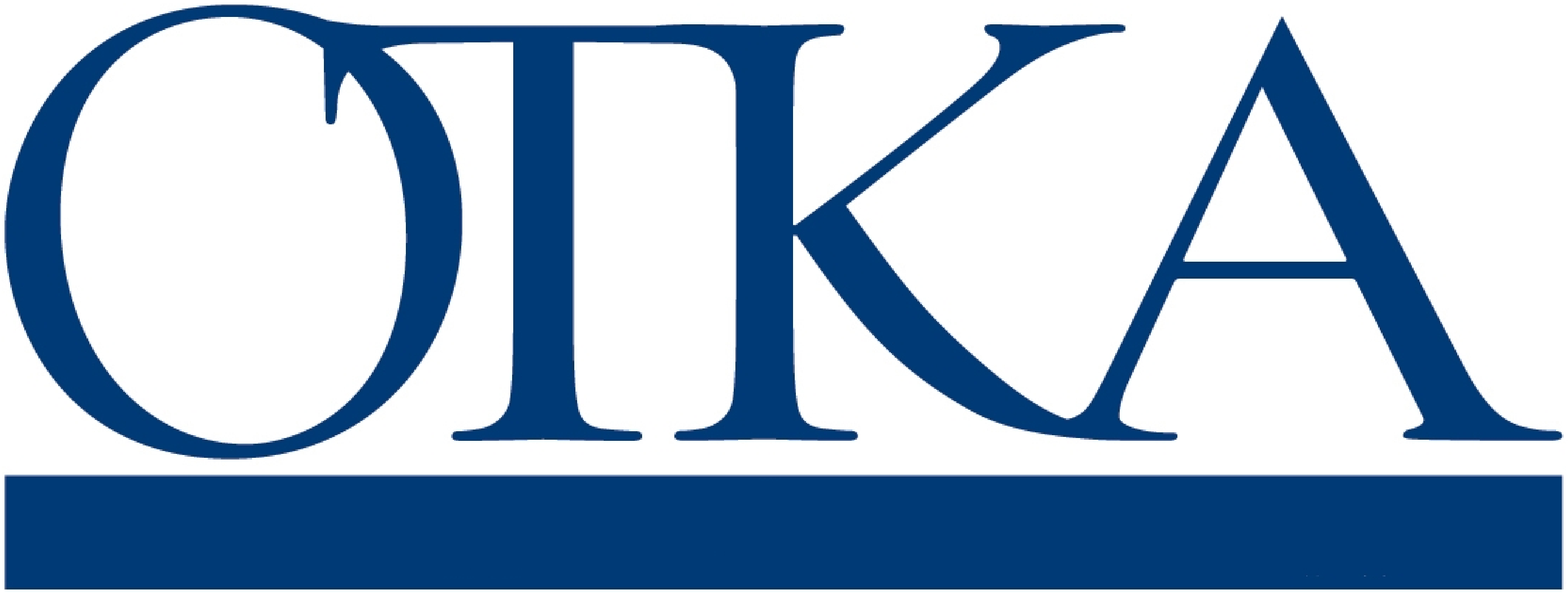}\tabularnewline
\end{tabular}}

\address{Department of Mathematics, Tampere University of Technology, PL 553,
33101 Tampere, Finland}

\email{sf@tut.fi}

\address{Department of Mathematics, Washington University in St.~Louis, St.~Louis,
MO, 63130}

\email{russw@math.wustl.edu}
\begin{abstract}
Rival and Zaguia showed that the antichain cutsets of a finite Boolean
lattice are exactly the level sets. We show that a similar characterization
of antichain cutsets holds for any strongly connected poset of locally
finite height. As a corollary, we characterize the antichain cutsets
in semimodular lattices, supersolvable lattices, Bruhat orders, locally
shellable lattices, and many more. We also consider a generalization
to strongly connected $d$-uniform hypergraphs.

\global\long\def\height{\operatorname{height}}
\vspace{-0.9cm}

\end{abstract}
\maketitle

\section{\label{sec:Introduction}Introduction}

An \emph{antichain cutset} in a poset $P$ is a set of elements that
intersects each maximal chain in exactly one element. For example,
in a graded poset, any \emph{level set} (consisting of all elements
of a specified rank) is an antichain cutset. Rival and Zaguia showed
\cite[Theorem 4]{Rival/Zaguia:1985} that for a finite Boolean lattice
the converse holds: every antichain cutset is a level set. Other papers
discussing antichain cutsets include \cite{Behrendt:1991,BoYu:1991,El-Zahar/Zaguia:1986,Ginsburg/Rival/Sands:1986,Rival/Zaguia:1987};
antichain cutsets also appear in Rota's well-known Crosscut Theorem
\cite{Bjorner:1995,Stern:1999}.

The purpose of the current paper is to generalize this converse result
of Rival and Zaguia to wide families of finite and infinite posets.
A poset $P$ is said to be \emph{strongly connected} if for any two
maximal chains $\mathbf{c}$ and $\mathbf{d}$ there is a sequence
of maximal chains 
\[
\mathbf{c}=\mathbf{c}^{(0)},\mathbf{c}^{(1)},\mathbf{c}^{(2)},\dots,\mathbf{c}^{(n)}=\mathbf{d}
\]
 such that the symmetric difference of $\mathbf{c}^{(i)}$ and $\mathbf{c}^{(i+1)}$
has cardinality two. Then:
\begin{thm}
\label{thm:MainThm-weak}If a poset $P$ is strongly connected with
distinct antichain cutsets $A$ and $B$, then $A$ and $B$ are disjoint.
\end{thm}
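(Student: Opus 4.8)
The plan is to keep track, for each maximal chain $\mathbf{c}$, of the unique element $a(\mathbf{c})\in A\cap\mathbf{c}$ and the unique element $b(\mathbf{c})\in B\cap\mathbf{c}$ guaranteed by the cutset property, and to call $\mathbf{c}$ \emph{balanced} when $a(\mathbf{c})=b(\mathbf{c})$. I will establish two facts: first, that the balanced condition propagates between maximal chains whose symmetric difference has cardinality two; and second, that if \emph{some} maximal chain is balanced then, by strong connectivity, \emph{every} maximal chain is balanced, which in turn forces $A=B$. Reading this contrapositively gives the theorem: if $A\cap B$ contained a point $x$, then any maximal chain through $x$ would be balanced — such a chain exists by Zorn's lemma, the union of a $\subseteq$-chain of chains being a chain — so all maximal chains would be balanced, hence $A=B$, contradicting that $A$ and $B$ are distinct.

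The heart of the argument is the local propagation step, so I would carry that out first. Suppose $\mathbf{c}$ and $\mathbf{c}'$ are maximal chains with $|\mathbf{c}\triangle\mathbf{c}'|=2$. Since a maximal chain cannot properly contain another, we have $\mathbf{c}\setminus\mathbf{c}'=\{u\}$ and $\mathbf{c}'\setminus\mathbf{c}=\{v\}$ for single elements $u\neq v$, so $\mathbf{c}'=(\mathbf{c}\cap\mathbf{c}')\cup\{v\}$. Assume $\mathbf{c}$ is balanced and set $x=a(\mathbf{c})=b(\mathbf{c})$. If $x\in\mathbf{c}\cap\mathbf{c}'$, then $x$ is a common element of $A\cap\mathbf{c}'$ and of $B\cap\mathbf{c}'$, so by uniqueness $a(\mathbf{c}')=x=b(\mathbf{c}')$. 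If instead $x=u$, then $a(\mathbf{c}')$ cannot lie in $\mathbf{c}\cap\mathbf{c}'\subseteq\mathbf{c}$, because the only element of $A$ on $\mathbf{c}$ is $u\notin\mathbf{c}'$; hence $a(\mathbf{c}')=v$, and the same reasoning gives $b(\mathbf{c}')=v$. In either case $\mathbf{c}'$ is balanced. (Note this uses nothing about the order-theoretic position of $u$ and $v$ inside the ``gap'' they fill, only the one-point cutset property.)

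With the propagation step in hand the remainder is routine. Assuming $A\cap B\neq\emptyset$, fix $x\in A\cap B$ and a maximal chain $\mathbf{c}_0$ through it; then $a(\mathbf{c}_0)=x=b(\mathbf{c}_0)$, so $\mathbf{c}_0$ is balanced. For an arbitrary maximal chain $\mathbf{d}$, strong connectivity yields a sequence $\mathbf{c}_0=\mathbf{c}^{(0)},\mathbf{c}^{(1)},\dots,\mathbf{c}^{(n)}=\mathbf{d}$ of maximal chains with consecutive symmetric differences of cardinality two, and induction along this sequence using the propagation step shows $\mathbf{d}$ is balanced. Finally, every $a\in A$ lies on some maximal chain $\mathbf{d}$ (Zorn's lemma again), and there $a$ must be $a(\mathbf{d})$, hence equal to $b(\mathbf{d})\in B$; thus $A\subseteq B$, and symmetrically $B\subseteq A$, so $A=B$ — the desired contradiction, hence $A\cap B=\emptyset$. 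The only step demanding real attention is the local propagation claim (together with the small set-theoretic point that every element lies on a maximal chain, needed since no finiteness is assumed), and even there the verification is short once one recalls that an antichain cutset meets each maximal chain in exactly one point, so naming that point pins down $A$ (respectively $B$) on the chain completely.
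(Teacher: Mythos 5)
Your proof is correct and takes essentially the same route as the paper: the crucial local step (a maximal chain meeting $A$ and $B$ at a common point cannot have symmetric difference two with one meeting them at distinct points) is exactly the paper's computation, and your induction of ``balancedness'' along the connecting sequence is just the paper's argument of locating an adjacent pair where the two behaviours meet, run in contrapositive. The only difference is that the paper proves a slightly stronger pairwise-locally strongly connected variant by first restricting to an interval, but for the statement as given your argument is the same one.
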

Theorem \ref{thm:MainThm-weak} will follow immediately from the somewhat
stronger Theorem \ref{thm:MainThm-strong}. We note that Theorem \ref{thm:MainThm-weak}
holds for any poset, with no assumption that the poset is discrete
or graded.

In the case of a discrete poset we can say more:
\begin{thm}
\label{thm:MainThm-weakgraded}If a discrete poset $P$ is strongly
connected, then the antichain cutsets of $P$ are exactly the level
sets.
\end{thm}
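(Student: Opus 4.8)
The plan is to derive Theorem~\ref{thm:MainThm-weakgraded} from Theorem~\ref{thm:MainThm-weak} (hence, ultimately, from Theorem~\ref{thm:MainThm-strong}) together with two structural facts about a discrete strongly connected poset $P$: that $P$ is necessarily graded, say with rank function $\rho$, so that its level sets $L_{k}=\rho^{-1}(k)$ are defined; and that every non-empty level set is an antichain cutset. Granting these, the argument is short. One inclusion is the second fact. For the other, let $A$ be an antichain cutset; it is non-empty, since $A$ meets every maximal chain and $P$ has one, so fix $a\in A$. Then $A$ and $L_{\rho(a)}$ are both antichain cutsets and $a\in A\cap L_{\rho(a)}$, so they are not disjoint; by Theorem~\ref{thm:MainThm-weak} they cannot be distinct, whence $A=L_{\rho(a)}$ is a level set.

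Two observations underlie the structural facts. First, in a discrete poset every maximal chain $\mathbf{c}$ is saturated: consecutive elements are covering pairs, and $\mathbf{c}$ meets each closed interval in a finite chain. Second, consider the elementary moves appearing in the definition of strong connectivity: if maximal chains $\mathbf{c}$ and $\mathbf{c}'$ have symmetric difference of cardinality two, then --- because one maximal chain cannot properly contain another --- we must have $\mathbf{c}'=(\mathbf{c}\setminus\{x\})\cup\{x'\}$ with $x\in\mathbf{c}$ and $x'\notin\mathbf{c}$, and saturation of both chains forces $x$ and $x'$ to fill the same ``slot'': they share their lower cover and their upper cover within the chains (with the evident modification when $x$ is least or greatest in $\mathbf{c}$).

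To see that $P$ is graded, I would show it satisfies the Jordan--Dedekind chain condition. For maximal chains $\mathbf{e}$, $\mathbf{e}'$ joined by a sequence of elementary moves the quantity $|\mathbf{e}\setminus\mathbf{e}'|-|\mathbf{e}'\setminus\mathbf{e}|$ is defined (the symmetric difference is finite) and is unchanged by each move --- a move replaces a single element of a chain by a single element not on it --- so, being $0$ at the start, it is $0$; thus $|\mathbf{e}\setminus\mathbf{e}'|=|\mathbf{e}'\setminus\mathbf{e}|$. Now given $x\le y$ and saturated chains $\sigma,\tau$ of $[x,y]$, extend each by the part below $x$ and the part above $y$ of one fixed maximal chain through $x$ and $y$; one checks these extensions are maximal chains of $P$ agreeing outside $[x,y]$, so by strong connectivity $|\sigma\setminus\tau|=|\tau\setminus\sigma|$ and hence $|\sigma|=|\tau|$. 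Therefore $P$ carries a rank function $\rho$. For the second fact, each $L_{k}$ is an antichain because comparable elements are joined by a saturated chain of positive length and so differ in rank; and $L_{k}$ is a cutset because along any maximal chain the rank rises by one at each step, the slot observation shows an elementary move leaves the set of ranks attained on a chain unchanged, and strong connectivity then makes this set the same --- namely the whole image of $\rho$ --- for every maximal chain, so each maximal chain meets $L_{k}$, in exactly one point since ranks along a chain are distinct.

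The heart of the matter is the passage ``strongly connected $\Rightarrow$ graded'': one must know that the local one-for-one swaps patch into a genuine rank function, and the bookkeeping above --- the invariance of $|\mathbf{e}\setminus\mathbf{e}'|-|\mathbf{e}'\setminus\mathbf{e}|$ under a move, and the verification that the extended chains are maximal --- is where the real work lies. After that, the theorem is just a matter of assembling the pieces and invoking Theorem~\ref{thm:MainThm-weak} once.
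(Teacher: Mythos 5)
Your overall strategy is the paper's: reduce to the disjointness statement (Theorem \ref{thm:MainThm-weak}), show $P$ is graded, and show the level sets are antichain cutsets. Your bookkeeping for the exchange moves is exactly the paper's Lemma \ref{lem:PairwiseCompChains}, and your extension-of-interval-chains argument gives the paper's Corollary \ref{cor:DiscreteSConnLFH} (every interval is graded of finite height, i.e.\ the Jordan--Dedekind condition). The final assembly via ``$A$ and $L_{\rho(a)}$ meet, hence coincide'' is also the paper's Theorem \ref{thm:Main2LocallyConn}.

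However, there is a genuine gap at the step you dispose of with ``Therefore $P$ carries a rank function $\rho$.'' The Jordan--Dedekind condition only gives a \emph{relative} rank inside each interval; it does not by itself produce a single map $\rho$ on all of $P$, increasing by one across every cover, when $P$ has no least element and two elements need not have a common lower bound (e.g.\ a strongly connected poset can contain minimal elements with no common lower bound). Stitching the interval-wise gradings into a global $\rho$ requires anchoring at a base point and proving the value is independent of the choices made -- this is precisely the content of the paper's Lemma \ref{lem:LocallyGradingGivesGlobal} ($\rho(y)=\height[w,y]-\height[w,x_{0}]$ for a common lower bound $w$, with a well-definedness check via a common lower bound of two $w$'s), together with the preliminary argument that when common lower (upper) bounds fail, Lemma \ref{lem:PairwiseCompChains} forces every maximal chain to have a least (greatest) element, so one may adjoin $\hat{0}$ and/or $\hat{1}$ without destroying discreteness or strong connectivity. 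Your later steps (ranks rise by one along every maximal chain; the set of attained ranks is invariant under an elementary move) all presuppose this global $\rho$, so they cannot be used to construct it without circularity. You correctly identify ``local swaps patch into a genuine rank function'' as the heart of the matter, but the bookkeeping you actually carry out stops short of it; supplying the anchoring-and-well-definedness argument (or the $\hat{0},\hat{1}$ augmentation as in the paper) is what is missing. Aside from this, your observation that strong connectivity forces every maximal chain to attain the same set of ranks is a pleasant way to see that level sets are cutsets, essentially re-deriving the ``onto $R$'' clause of the paper's definition of an $R$-grading.
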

We contrast Theorem \ref{thm:MainThm-weakgraded} with the result
of Behrendt \cite{Behrendt:1991} that any lattice may be obtained
as the ``lattice of antichain cutsets'' of a poset with height 3.
In the situation of Theorem \ref{thm:MainThm-weakgraded}, the lattice
of antichain cutsets is a chain.\smallskip{}

The term ``strongly connected'' comes from a relationship with geometry,
and from the geometric combinatorics literature we obtain a large
list of examples:
\begin{prop}
\label{pro:ACClevelsetsExamples}Let $P$ be any locally finite height
semimodular lattice or supersolvable lattice, or a Bruhat order of
any Coxeter group, or any graded $EL$-shellable/shellable/Cohen-Macaulay
poset. Then the antichain cutsets of $P$ are exactly the level sets.
\end{prop}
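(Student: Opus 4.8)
The plan is to deduce everything from Theorem~\ref{thm:MainThm-weakgraded}: each poset $P$ in the list is graded of locally finite height, hence in particular discrete, so it suffices to check that $P$ is \emph{strongly connected}. The first step is to recast this condition geometrically. Regarding the maximal chains of $P$ as the facets of the order complex $\Delta(P)$, two maximal chains of a graded poset have symmetric difference of cardinality two precisely when they share all but one element, that is, when the corresponding facets of $\Delta(P)$ meet in a ridge (a codimension-one face). Thus $P$ is strongly connected if and only if the facet--ridge (``dual'') graph of the pure complex $\Delta(P)$ is connected; in the language of geometric combinatorics, $P$ is strongly connected exactly when $\Delta(P)$ is strongly connected as a simplicial complex. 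This is the sense in which the term ``strongly connected'' comes from geometry.

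Second, I would supply the two standard implications that make strong connectivity cheap to verify. If $\Delta$ is shellable, then in a shelling $F_1,\dots,F_t$ each $F_k$ with $k\ge 2$ meets some earlier facet in a ridge, so the dual graph is connected: shellable complexes are strongly connected. If $\Delta$ is Cohen--Macaulay over some field, an induction on $\dim\Delta$---using that $\widetilde H_0(\operatorname{lk} F)=0$ for every face $F$ and that links of Cohen--Macaulay complexes are again Cohen--Macaulay---shows that $\Delta$ is strongly connected as well (see \cite{Bjorner:1995}). Now a graded $EL$-shellable poset is shellable, and a graded shellable or Cohen--Macaulay poset is Cohen--Macaulay, so the last family in the statement is strongly connected outright; finite semimodular and supersolvable lattices are $EL$-shellable by classical results of Bj\"orner, and the Bruhat order of a Coxeter group is $CL$-shellable by Bj\"orner and Wachs, so these cases reduce to the ones already treated. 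Theorem~\ref{thm:MainThm-weakgraded} then identifies the antichain cutsets of $P$ with its level sets.

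The main obstacle is the genuinely locally finite (non-finite) case, where $\Delta(P)$ may be infinite-dimensional, maximal chains may be infinite, and the two implications above---formulated for finite complexes---do not apply verbatim; indeed strong connectivity in the literal sense can fail when two maximal chains differ in infinitely many elements. Here one must instead work with the local hypothesis that every interval $[x,y]$ of $P$ is shellable (equivalently Cohen--Macaulay), together with connectedness of $P$ itself, and either bootstrap the finite-interval statement or invoke a localized form of Theorem~\ref{thm:MainThm-weakgraded}; one should also dispose separately of the degenerate cases where $P$ has height at most one. Since all the shellability and Cohen--Macaulayness inputs are off the shelf, it is this passage to the local setting, and the accompanying matching of the ``discrete'' and ``locally finite height'' hypotheses, that I expect to require the only real care.
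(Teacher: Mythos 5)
Your first two paragraphs only settle the finite-height cases, and the route you choose cannot be repaired for the rest of the statement: for the genuinely infinite members of the list, global strong connectivity simply fails, so Theorem \ref{thm:MainThm-weakgraded} is not applicable. For instance, the distributive (hence semimodular) lattice $\mathbb{Z}\times\{0,1\}$ has locally finite height, but the two maximal chains $\mathbb{Z}\times\{0\}$ and $\mathbb{Z}\times\{1\}$ have infinite symmetric difference and so cannot be joined by finitely many elementary exchanges; the same phenomenon occurs in the Bruhat order of any infinite Coxeter group. You acknowledge this obstacle in your last paragraph, but that is exactly where the content of the proposition lies, and you defer it. The paper's actual argument never passes through global strong connectivity: it proves the disjointness statement under the weaker hypothesis that $P$ is \emph{pairwise-locally} strongly connected (Theorem \ref{thm:MainThm-strong}), combines it with $R$-gradedness (Theorem \ref{thm:Main2LocallyConn}), and then observes that each family in the list is $R$-graded and has every interval $EL$-shellable/shellable/Cohen--Macaulay, hence pairwise-locally strongly connected, with the common-bound requirement automatic because these posets are lattices or (for Bruhat order) directed with a least element.

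Moreover, the localized statement you gesture at --- every interval shellable plus connectedness of $P$ --- is false as stated, so the deferred step is not routine. Take two ``diamonds'' $0_{1}<a_{1},b_{1}<1_{1}$ and $0_{2}<a_{2},b_{2}<1_{2}$ and add the single extra cover $0_{1}\lessdot a_{2}$. The resulting poset is connected, graded by height with $R=\{0,1,2\}$, and every interval is a chain or a diamond, hence $EL$-shellable and strongly connected; yet $\{1_{1},a_{2},b_{2}\}$ is an antichain cutset that is not a level set (and it meets the middle level set without equalling it). What blocks this example is precisely the pairwise-local condition: $a_{1}$ and $b_{2}$ lie in no common interval at all. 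So to complete your proof you must replace ``connectedness of $P$'' by the hypothesis that any two elements lie in a common strongly connected interval, prove the disjointness theorem in that setting (the paper's proof works by intersecting both cutsets with such an interval and running your finite exchange argument there), and then verify that hypothesis for each family --- which is where the lattice/directedness structure, not mere connectedness, is used.
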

\medskip{}
The paper is organized as follows. In the remainder of this section,
we introduce additional useful terminology for posets. In Section
\ref{sec:Proof-of-Main} we prove Theorems \ref{thm:MainThm-weak}
and \ref{thm:MainThm-weakgraded}. In Section \ref{sec:Connections}
we make the connection with geometric combinatorics, and prove Proposition
\ref{pro:ACClevelsetsExamples}. We close in Section \ref{sec:Generalization-to-hypergraphs}
by briefly sketching a generalization to uniform hypergraphs.

The paper incorporates and expands upon \cite{Foldes:2011UNP}.

\subsection{Terminology and notation}

For any poset $P$, the \emph{length} of a finite chain $\mathbf{c}$
is one less than the cardinality of $\mathbf{c}$. The \emph{height}
of $P$ is the supremum of lengths over all finite chains in $P$. 

Let $\mathcal{X}$ be some poset property, such as finiteness or strong
connectivity. We say that $P$ is \emph{locally $\mathcal{X}$} if
every interval $[a,b]$ in $P$ has property $\mathcal{X}$. For example,
a poset is \emph{locally strongly connected} if every interval is
strongly connected, and has \emph{locally finite height }if every
interval has finite height. 

We say that $P$ is \emph{pairwise-locally $\mathcal{X}$} if for
all $x,y\in P$ there is some interval containing $x$ and $y$ that
has property $\mathcal{X}$. It is obvious that if $\mathcal{X}$
is a property closed under taking subintervals, then locally $\mathcal{X}$
and pairwise-locally $\mathcal{X}$ are equivalent for lattices, but
this does not hold for general posets. We remark that pairwise-locally
$\mathcal{X}$ already requires that any $x$ and $y$ have some upper
and lower bound.

A poset is \emph{bounded} if it has a greatest element $\hat{1}$
and least element $\hat{0}$. If $x<y$, and if there is no $z$ with
$x<z<y$, then we write $x\lessdot y$, and say that $y$ \emph{covers}
$x$. If $P$ has a least element $\hat{0}$, then the \emph{atoms}
of $P$ are the elements covering $\hat{0}$.
\begin{example}
\label{exa:SConnNotLocally}Consider the poset $E$ consisting of
the natural numbers $\{1,2,3,4,5,30,25,200,300,600\}$, ordered by
divisibility. It is easy to verify that $E$ is strongly connected,
and since $E$ is bounded, it is also pairwise-locally strongly connected.
On the other hand, the interval $[1,200]$ is not strongly connected,
so $E$ is not locally strongly connected.
\end{example}
For any totally ordered set $R$, we define an \emph{$R$-grading}
or \emph{$R$-ranking} of a poset $P$ to be a map $\rho:P\rightarrow R$
such that $\rho$ restricts on each maximal chain of $P$ to an isomorphism
(onto $R$). In a poset with finite height $n-1$, the existence of
an $R$-grading (or more specifically an $[n]$-grading, where $[n]=\{1,\dots,n\}$)
is obviously equivalent to the usual notion of gradedness as defined
in e.g. \cite{Stanley:1997}. More generally, a similar equivalence
holds in any poset with all nonempty chains having a minimal element.
A \emph{level set} of a poset with an $R$-grading $\rho$ is any
non-empty set of the form $L_{r}=\{x\,:\,\rho(x)=r\}$. 

A poset is \emph{discrete} if every interval has a maximal chain of
finite length: thus, in a discrete poset $x<y$ if and only if there
is a chain of cover relations between the two. We caution that the
definition of discreteness for posets is not entirely consistent in
the literature, and that for example \cite{Stern:1999} defines discreteness
to mean locally finite height, a strictly stronger condition. This
inconsistency should not be confusing, as we will mostly consider
$R$-graded posets, where discreteness and locally finite height are
equivalent.

For other standard poset notation and terminology, we refer to \cite{Stanley:1997}
or \cite{Stern:1999}.

\section{Proof of Main Theorems\label{sec:Proof-of-Main}}

We prove a stronger variant of Theorem \ref{thm:MainThm-weak}, which
follows by adding a $\hat{0}$ and $\hat{1}$ element if necessary:
\begin{thm}
\label{thm:MainThm-strong}If $P$ is a pairwise-locally strongly
connected poset with antichain cutsets $A$ and $B$, then $A$ and
$B$ are disjoint.\end{thm}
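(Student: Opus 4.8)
The plan is to establish the equivalent statement that if $A$ and $B$ are antichain cutsets of $P$ with $A\cap B\neq\emptyset$, then $A=B$; this is exactly the content of Theorem~\ref{thm:MainThm-strong} (and, via the reduction noted above, of Theorem~\ref{thm:MainThm-weak}), since it rules out distinct cutsets meeting one another. I would observe first that an antichain cutset is automatically an antichain, as two comparable elements of a cutset would lie together on some maximal chain. Now fix $x\in A\cap B$ and an arbitrary $y\in A$; by the symmetry of $A$ and $B$ it is enough to show $y\in B$, for then $A\subseteq B$ and likewise $B\subseteq A$. If $y=x$ there is nothing to prove, so assume $y\neq x$; since $A$ is an antichain, $x$ and $y$ are incomparable. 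Using that $P$ is pairwise-locally strongly connected, choose an interval $I=[p,q]$ containing $x$ and $y$ which is strongly connected; incomparability of $x$ and $y$ forces $p<x,y<q$, so $x$ and $y$ lie strictly inside $I$.

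Pick a maximal chain $\mathbf{m}$ of $I$ with $x\in\mathbf{m}$ and a maximal chain $\mathbf{n}$ of $I$ with $y\in\mathbf{n}$, and join them by a strong-connectivity sequence $\mathbf{m}=\mathbf{m}^{(0)},\dots,\mathbf{m}^{(k)}=\mathbf{n}$ of maximal chains of $I$, with each symmetric difference $\mathbf{m}^{(j)}\,\triangle\,\mathbf{m}^{(j+1)}$ of cardinality two. The central step is a lifting: extend $\mathbf{m}^{(0)}$ to a maximal chain $\mathbf{c}^{(0)}$ of $P$ (Zorn's Lemma); maximality of $\mathbf{m}^{(0)}$ in $I$ forces $\mathbf{c}^{(0)}\cap I=\mathbf{m}^{(0)}$, so the set $D:=\mathbf{c}^{(0)}\setminus I$ consists entirely of elements that are either below $p$ or above $q$. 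I then put $\mathbf{c}^{(j)}:=D\cup\mathbf{m}^{(j)}$. One verifies that each $\mathbf{c}^{(j)}$ is a maximal chain of $P$: it is a chain because $D$ splits as a part $<p$ and a part $>q$ while $\mathbf{m}^{(j)}\subseteq[p,q]$, and it is maximal because any element comparable to all of $\mathbf{c}^{(j)}$ that could be inserted would either enlarge $\mathbf{m}^{(j)}$ inside $I$ or enlarge $\mathbf{c}^{(0)}$, both impossible. Moreover $\mathbf{c}^{(j)}\cap I=\mathbf{m}^{(j)}$ and $\mathbf{c}^{(j)}\,\triangle\,\mathbf{c}^{(j+1)}=\mathbf{m}^{(j)}\,\triangle\,\mathbf{m}^{(j+1)}$ has cardinality two. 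Thus $\mathbf{c}^{(0)},\dots,\mathbf{c}^{(k)}$ is a strong-connectivity sequence in $P$ from a maximal chain through $x$ to a maximal chain through $y$.

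Finally I track, along this sequence, the $A$-element and $B$-element of each chain. For a maximal chain $\mathbf{c}$ of $P$ write $A\cap\mathbf{c}=\{a(\mathbf{c})\}$ and $B\cap\mathbf{c}=\{b(\mathbf{c})\}$. The claim to check is: if $\mathbf{c},\mathbf{c}'$ are maximal chains with $\mathbf{c}\,\triangle\,\mathbf{c}'=\{u,v\}$, where $u\in\mathbf{c}$ and $v\in\mathbf{c}'$, and $a(\mathbf{c})=b(\mathbf{c})$, then $a(\mathbf{c}')=b(\mathbf{c}')$. Indeed, if this common element is not $u$ it lies in $\mathbf{c}\cap\mathbf{c}'$ and so is both $a(\mathbf{c}')$ and $b(\mathbf{c}')$; and if it equals $u$, then $A$ and $B$ both miss $\mathbf{c}\cap\mathbf{c}'$ entirely, so each of $A\cap\mathbf{c}'$ and $B\cap\mathbf{c}'$ must be $\{v\}$. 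Since $x\in A\cap B$ lies on $\mathbf{c}^{(0)}$ we have $a(\mathbf{c}^{(0)})=b(\mathbf{c}^{(0)})=x$; applying the claim inductively along the sequence gives $a(\mathbf{c}^{(k)})=b(\mathbf{c}^{(k)})$. But $y\in A$ lies on $\mathbf{c}^{(k)}$, so $a(\mathbf{c}^{(k)})=y$, whence $b(\mathbf{c}^{(k)})=y$ and $y\in B$, as required.

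The main obstacle is the gap between the hypothesis, which only controls intervals, and the notion of antichain cutset, which refers to maximal chains of all of $P$: one cannot merely restrict $A$ and $B$ to $I$ and invoke a strongly connected case, since the restriction of a global antichain cutset need not meet every maximal chain of $I$ — on a maximal chain of $P$ extending a given maximal chain of $I$, the cutset element can lie outside $I$. The lifting construction of the second paragraph is precisely what bridges this gap, and the only genuinely delicate point is checking that the modified chains $\mathbf{c}^{(j)}$ remain maximal in $P$, a routine but necessary case analysis around $p$ and $q$.
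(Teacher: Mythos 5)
Your proof is correct, but it takes a genuinely different route from the paper's. The paper restricts the cutsets to the interval: it chooses a strongly connected interval $I$ containing $y\in A\cap B$ and $z\in B\setminus A$, observes that $A\cap I$ and $B\cap I$ are then antichain cutsets of $I$, and gets a contradiction from a single adjacent pair in a strong-connectivity sequence inside $I$ (the place where the chains switch from meeting $B$ at a point of $A\cap B$ to meeting it at a point of $B\setminus A$). You instead keep $A$ and $B$ global and lift the strong-connectivity sequence from $I$ to $P$ by adjoining the fixed tail $D=\mathbf{c}^{(0)}\setminus I$, then propagate the equality $a(\mathbf{c})=b(\mathbf{c})$ along the lifted sequence; your verifications (that $D\cup\mathbf{m}^{(j)}$ is a maximal chain of $P$, that the symmetric differences are preserved, and the two-case tracking step) are all sound. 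One remark: the ``main obstacle'' you cite as forcing the lifting is not actually an obstacle in this situation, and the paper does exactly the restriction you deem impossible. It is legitimate here because $I$ contains a point of $A$ (and of $B$): if $\mathbf{c}$ is a maximal chain of $P$ extending a maximal chain of $I=[p,q]$, every element of $\mathbf{c}\setminus I$ is $<p$ or $>q$, so the unique element of $A\cap\mathbf{c}$ cannot lie outside $I$ without being strictly comparable to $x\in A\cap I$, contradicting that $A$ is an antichain; likewise for $B$. That is the content of the paper's ``it follows easily''. The upshot: the paper's argument is shorter, while yours trades that observation for the chain-lifting construction, runs as a direct induction (showing every element of $A$ lies in $B$, hence $A=B$) rather than a contradiction, and in fact never needs the incomparability of $x$ and $y$ that you establish at the outset.
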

\begin{proof}
Suppose by contradiction that $A$ and $B$ are antichain cutsets
with nontrivial intersection. If $y\in A\cap B$ and $z\in B\setminus A$,
then there is some strongly connected interval $I$ containing $y$
and $z$, and it follows easily that $A\cap I$ and $B\cap I$ are
antichain cutsets for $I$. 

Let $\mathcal{M}$ be the set of maximal chains on $I$ that intersect
$B$ at a member of $A\cap B$, and $\mathcal{N}$ be the set of maximal
chains on $I$ that intersect $B$ at a member of $B\setminus A$.
By construction both $\mathcal{M}$ and $\mathcal{N}$ are nonempty.
Applying the strongly connected property to $\mathbf{c}\in\mathcal{M}$
and $\mathbf{d}\in\mathcal{N}$ gives that there exist maximal chains
$\mathbf{m}\in\mathcal{M}$ and $\mathbf{n}\in\mathcal{N}$ such that
the symmetric difference of $\mathbf{m}$ and $\mathbf{n}$ is 2. 

Let $x$ be the unique element of $\mathbf{m}\cap A=\mathbf{m}\cap B$,
and let $a,b$ denote respectively the unique elements of $\mathbf{n}\cap A$
and $\mathbf{n}\cap B=\mathbf{n}\cap(B\setminus A)$. But then $\mathbf{m}\setminus\{x\}$
intersects neither $A$ nor $B$, hence all of $x,a,b$ are contained
in the symmetric difference of $\mathbf{m}$ and $\mathbf{n}$. It
follows that $a=b$, a contradiction.
\end{proof}
In the presence of an $R$-grading, we additionally have:
\begin{thm}
\label{thm:Main2LocallyConn}If $P$ is a pairwise-locally strongly
connected and $R$-graded poset (for some total order $R$), then
the antichain cutsets of $P$ are exactly the level sets.\end{thm}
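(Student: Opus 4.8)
The plan is to reduce this to Theorem~\ref{thm:MainThm-strong} after disposing of the one routine inclusion. Two things need checking: that every level set of $P$ is an antichain cutset, and that, conversely, every antichain cutset of $P$ is a level set.

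For the first inclusion I would argue directly from the definition of an $R$-grading. Fix $r\in R$ and let $\mathbf{c}$ be a maximal chain of $P$. Since $\rho$ restricts on $\mathbf{c}$ to an \emph{isomorphism onto} $R$, there is exactly one $x\in\mathbf{c}$ with $\rho(x)=r$, so $L_r\cap\mathbf{c}=\{x\}$; thus $L_r$ meets every maximal chain in precisely one element. And $L_r$ is an antichain: a comparable pair $u<v$ in $L_r$ would lie together on some maximal chain (extend $\{u,v\}$ to one, which is possible by Zorn's lemma), on which $\rho$ is injective, contradicting $\rho(u)=\rho(v)$. Hence $L_r$ is an antichain cutset.

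For the converse, let $A$ be an antichain cutset of $P$; we may assume $P$ is nonempty, so that $P$ has a maximal chain and $A$, meeting it, is nonempty. Pick $y\in A$ and set $r=\rho(y)$. By the previous paragraph $L_r$ is an antichain cutset, and $y\in A\cap L_r$, so $A$ and $L_r$ are antichain cutsets of $P$ with nonempty intersection. Since $P$ is pairwise-locally strongly connected, Theorem~\ref{thm:MainThm-strong} forces $A=L_r$, and $A$ is a level set.

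I do not expect a genuine obstacle: the entire substance is contained in Theorem~\ref{thm:MainThm-strong}, and the $R$-grading is used only to supply, through each member of a given antichain cutset, a level set that is itself an antichain cutset and therefore equal to the given one. The only steps needing a moment of attention are the verification that an $R$-grading makes each $L_r$ hit every maximal chain exactly once --- this is immediate, as surjectivity onto $R$ is part of being an isomorphism onto $R$ --- and that each $L_r$ is an antichain, which the chain-extension argument above handles.
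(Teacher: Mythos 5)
Your proposal is correct and follows essentially the same route as the paper: observe that each level set is an antichain cutset by the definition of an $R$-grading, note that any antichain cutset meets some level set (the paper phrases this via the level sets partitioning $P$, you via picking $y\in A$), and then invoke Theorem~\ref{thm:MainThm-strong}. The extra verification that $L_r$ is an antichain is harmless but unneeded, since the paper's notion of antichain cutset only requires meeting each maximal chain exactly once.
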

\begin{proof}
The level sets partition $P$, hence every antichain cutset intersects
some level set. Since a level set is an antichain cutset (by definition
of $R$-grading), the result follows from Theorem \ref{thm:MainThm-strong}.
\end{proof}
To prove Theorem \ref{thm:MainThm-weakgraded} it remains to show
that every discrete strongly connected poset is $R$-graded for some
$R$. We start with a lemma:
\begin{lem}
\label{lem:PairwiseCompChains}If $\mathbf{m}$ and $\mathbf{n}$
are maximal chains in a strongly connected poset $P$, then $\mathbf{m}\setminus\mathbf{n}$
and $\mathbf{n}\setminus\mathbf{m}$ have the same finite cardinality.\end{lem}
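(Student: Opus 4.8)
The plan is to attach to each maximal chain an integer which is constant along any sequence witnessing strong connectivity and which equals $0$ at $\mathbf{m}$, so that its vanishing at $\mathbf{n}$ is precisely the asserted equality.

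First I would fix a sequence $\mathbf{m}=\mathbf{c}^{(0)},\mathbf{c}^{(1)},\dots,\mathbf{c}^{(k)}=\mathbf{n}$ of maximal chains of $P$ with $|\mathbf{c}^{(i)}\triangle\mathbf{c}^{(i+1)}|=2$ for each $i$. Iterating the set identity $S\triangle U\subseteq(S\triangle T)\cup(T\triangle U)$ along this sequence gives $\mathbf{m}\triangle\mathbf{c}^{(j)}\subseteq\bigcup_{i<j}(\mathbf{c}^{(i)}\triangle\mathbf{c}^{(i+1)})$, so $|\mathbf{m}\triangle\mathbf{c}^{(j)}|\le 2j<\infty$ for every $j\le k$; in particular $\mathbf{m}\setminus\mathbf{n}$ and $\mathbf{n}\setminus\mathbf{m}$ are finite. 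More generally, if $T$ is obtained from some $\mathbf{c}^{(i)}$ by deleting or adjoining a single element, then $\mathbf{m}\triangle T$ is still finite, so we may set $\delta(T)=|T\setminus\mathbf{m}|-|\mathbf{m}\setminus T|$, a well-defined integer; note $\delta(\mathbf{m})=0$.

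Next I would record two observations. First, if $\mathbf{c}$ and $\mathbf{c}'$ are \emph{distinct} maximal chains with $|\mathbf{c}\triangle\mathbf{c}'|=2$, then $|\mathbf{c}\setminus\mathbf{c}'|=|\mathbf{c}'\setminus\mathbf{c}|=1$: since neither of two distinct maximal chains is contained in the other, both differences are nonempty, and as their cardinalities sum to $2$ each must be a singleton. Thus $\mathbf{c}'=(\mathbf{c}\setminus\{p\})\cup\{q\}$ for some $p\ne q$ with $p\in\mathbf{c}$, $q\notin\mathbf{c}$. Second, deleting an element lying in $\mathbf{c}$ decreases $\delta$ by exactly $1$, and adjoining an element not in $\mathbf{c}$ increases $\delta$ by exactly $1$, in each case independently of whether that element belongs to $\mathbf{m}$ — a routine check of the two cases (element in $\mathbf{m}$, element not in $\mathbf{m}$). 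Combining these, each elementary step $\mathbf{c}^{(i)}\mapsto\mathbf{c}^{(i+1)}$ is one deletion followed by one adjunction and hence preserves $\delta$, so $\delta(\mathbf{n})=\delta(\mathbf{c}^{(k)})=\dots=\delta(\mathbf{c}^{(0)})=\delta(\mathbf{m})=0$. Together with the finiteness from the first step this yields $|\mathbf{m}\setminus\mathbf{n}|=|\mathbf{n}\setminus\mathbf{m}|<\infty$, as required.

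I do not anticipate a genuine obstacle: the lemma is essentially bookkeeping. The two points that need a little care are making sure every cardinality appearing in $\delta$ is finite before the subtraction is formed (handled by the symmetric-difference estimate in the first step), and the remark that a size-two symmetric difference between two distinct maximal chains is a true one-for-one exchange rather than a pure deletion or insertion, which cannot happen between maximal chains. An equivalent route avoiding $\delta$ is a direct induction on $k$ proving $|\mathbf{c}^{(i)}\setminus\mathbf{m}|=|\mathbf{m}\setminus\mathbf{c}^{(i)}|$ at each stage, but it reduces to the same case verification.
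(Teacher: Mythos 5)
Your proof is correct and is essentially the paper's argument: the paper also walks along the exchange sequence from $\mathbf{m}$ to $\mathbf{n}$ and observes that each one-for-one exchange alters $|\mathbf{m}\setminus\mathbf{c}|$ and $|\mathbf{c}\setminus\mathbf{m}|$ by the same amount, which is exactly your invariant $\delta$ stated in slightly different words. You simply make explicit the bookkeeping the paper leaves implicit (finiteness of the symmetric differences and the fact that a size-two symmetric difference between distinct maximal chains is a genuine exchange), which is fine but not a different route.
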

\begin{proof}
By strong connectivity, $\mathbf{m}$ and $\mathbf{n}$ differ by
a finite number of exchanges; each exchange alters the cardinality
of $\vert\mathbf{m}\setminus\mathbf{n}\vert$ and $\vert\mathbf{n}\setminus\mathbf{m}\vert$
by the same number ($0$, $1$, or $-1$).\end{proof}
\begin{cor}
\label{cor:DiscreteSConnLFH}If $P$ is a discrete poset which is
strongly connected, locally strongly connected, or pairwise-locally
strongly connected, then every interval of $P$ is graded of finite
height.\end{cor}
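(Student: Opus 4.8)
The plan is to reduce all three hypotheses to one statement and then prove that statement with the help of Lemma~\ref{lem:PairwiseCompChains}, namely: \emph{if $Q$ is a discrete strongly connected poset, then every interval of $Q$ is graded of finite height}. Granting this, suppose first that $P$ is locally strongly connected and let $[a,b]$ be any interval of $P$. Then $[a,b]$ is strongly connected by hypothesis, and discrete because its subintervals are subintervals of $P$; applying the statement to the poset $[a,b]$ shows in particular that $[a,b]$ itself is graded of finite height. If instead $P$ is only pairwise-locally strongly connected, then $[a,b]$ is contained in some strongly connected interval $I=[c,d]$ of $P$, and $I$ is discrete for the same reason; since $[a,b]$ is an interval of $I$, applying the statement to $Q=I$ again does the job. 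And the case where $P$ is strongly connected is the statement itself with $Q=P$. So it suffices to fix a discrete strongly connected poset $P$ and an interval $[a,b]$ of $P$, and to show that $[a,b]$ is graded of finite height.

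By discreteness, $[a,b]$ has a maximal chain $\mathbf{c}_0$ of finite length. I would first show that every maximal chain $\mathbf{c}$ of $[a,b]$ satisfies $|\mathbf{c}|=|\mathbf{c}_0|$, the idea being to move the comparison into $P$, where Lemma~\ref{lem:PairwiseCompChains} is available. Extend $\mathbf{c}_0$ to a maximal chain $\mathbf{m}_0$ of $P$; since $\mathbf{c}_0$ is already maximal in $[a,b]$ we have $\mathbf{m}_0\cap[a,b]=\mathbf{c}_0$, so $\mathbf{m}_0$ is the disjoint union of $\mathbf{c}_0$ with its lower tail $\mathbf{m}_0^{-}=\{z\in\mathbf{m}_0:z<a\}$ and its upper tail $\mathbf{m}_0^{+}=\{z\in\mathbf{m}_0:z>b\}$. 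Splicing $\mathbf{c}$ in place of $\mathbf{c}_0$, set $\mathbf{m}=\mathbf{m}_0^{-}\cup\mathbf{c}\cup\mathbf{m}_0^{+}$. Granting that $\mathbf{m}$ is again a maximal chain of $P$ (the one delicate point; see below), it agrees with $\mathbf{m}_0$ outside $[a,b]$, so $\mathbf{m}_0\setminus\mathbf{m}=\mathbf{c}_0\setminus\mathbf{c}$ and $\mathbf{m}\setminus\mathbf{m}_0=\mathbf{c}\setminus\mathbf{c}_0$. Lemma~\ref{lem:PairwiseCompChains} then gives $|\mathbf{c}\setminus\mathbf{c}_0|=|\mathbf{c}_0\setminus\mathbf{c}|<\infty$, and since $\mathbf{c}_0\cap\mathbf{c}$ sits inside the finite set $\mathbf{c}_0$, it follows that $\mathbf{c}$ is finite with $|\mathbf{c}|=|\mathbf{c}_0|$.

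The delicate point is to check that $\mathbf{m}$ is a maximal chain of $P$. That it is a chain is clear, since every element of $\mathbf{m}_0^{-}$ is $<a$, every element of $\mathbf{c}$ lies in $[a,b]$, and every element of $\mathbf{m}_0^{+}$ is $>b$. For maximality, the key facts are that $\mathbf{m}_0^{-}\cup\{a\}$ is a maximal chain of the principal ideal $\{z\in P:z\le a\}$ and $\{b\}\cup\mathbf{m}_0^{+}$ is a maximal chain of the principal filter $\{z\in P:z\ge b\}$, both of which follow from the maximality of $\mathbf{m}_0$ in $P$. Then, given $w\in P$ comparable to every element of $\mathbf{m}$ (so in particular to $a$ and to $b$), one splits into the cases $w<a$, $a\le w\le b$, and $w>b$: in the first $w$ lands in $\mathbf{m}_0^{-}$, in the last in $\mathbf{m}_0^{+}$, and in the middle in $\mathbf{c}$ (using that $\mathbf{c}$ is maximal in $[a,b]$), so $w\in\mathbf{m}$ in all cases. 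Note that extending a chain to a maximal chain uses Zorn's lemma, which appears unavoidable once $P$ may be infinite.

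Finally, it remains to upgrade ``all maximal chains of $[a,b]$ have the same finite length'' to ``$[a,b]$ is graded of finite height.'' Applying the above to each subinterval $[a,x]$ with $x\in[a,b]$, all maximal chains of $[a,x]$ have a common finite length, which I call $\rho(x)$. If $x\lessdot y$ in $[a,b]$, then any maximal chain of $[a,x]$ with $y$ appended is a maximal chain of $[a,y]$, so $\rho(y)=\rho(x)+1$; hence $\rho$ is a rank function, $[a,b]$ is graded, and its height equals the finite number $\rho(b)$. I expect the only real obstacle to be the verification that the spliced chain $\mathbf{m}$ is maximal; the remaining steps are routine manipulation of intervals and finite cardinalities.
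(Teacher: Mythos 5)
Your proof is correct and follows the route the paper intends: the corollary is stated there without proof as a direct consequence of Lemma \ref{lem:PairwiseCompChains}, and you derive it from exactly that lemma, finishing with the standard observation that a bounded poset whose maximal chains all have the same finite length is graded. Your splicing argument is the right way to handle the globally strongly connected case, where (as Example \ref{exa:SConnNotLocally} shows) the interval $[a,b]$ need not itself be strongly connected, so the lemma must be applied to lifted maximal chains of $P$ rather than inside $[a,b]$; this is precisely the detail the paper leaves to the reader, and you verify it carefully.
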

\begin{lem}
\label{lem:LocallyGradingGivesGlobal}Let $P$ be a discrete and locally
graded poset such that every pair of elements $x,y\in P$ have an
upper bound and a lower bound in common. Then $P$ is $R$-graded
for some $R\subseteq\mathbb{Z}$.\end{lem}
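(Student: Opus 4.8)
The plan is to fix a base point $p_{0}\in P$ and assign to every $x\in P$ a ``height relative to $p_{0}$'' by descending to a common lower bound of $p_{0}$ and $x$ and comparing ranks there, then to check that this produces an honest $\mathbb{Z}$-valued grading. (Only common lower bounds will be used; by duality common upper bounds would serve equally well.) The preliminary observation is that every interval $[a,b]$ of $P$ is graded of finite height: graded by hypothesis, and of finite height because discreteness supplies a finite maximal chain of $[a,b]$ while gradedness forces all maximal chains of $[a,b]$ to have that length. So we may write $\lambda(a,b)$ for the common length of the maximal chains of $[a,b]$. The one elementary fact used throughout is \emph{additivity}: if $a\le b\le c$, then $\lambda(a,c)=\lambda(a,b)+\lambda(b,c)$, since concatenating a maximal chain of $[a,b]$ with one of $[b,c]$ is a maximal chain of $[a,c]$ and $[a,c]$ is graded.

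Now for $x\in P$ choose a common lower bound $\ell$ of $p_{0}$ and $x$, and set $\rho(x):=\lambda(\ell,x)-\lambda(\ell,p_{0})$. The first and main step is to see that this is independent of $\ell$. Given a second common lower bound $\ell'$ of $p_{0}$ and $x$, apply the hypothesis to the pair $\{\ell,\ell'\}$ to obtain a common lower bound $m$; then $m\le\ell\le x$ and $m\le\ell\le p_{0}$, so additivity gives $\lambda(\ell,x)-\lambda(\ell,p_{0})=\lambda(m,x)-\lambda(m,p_{0})$, and the same computation with $\ell'$ in place of $\ell$ yields the identical value. Thus $\rho$ is well defined. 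I expect this well-definedness step, together with the additivity lemma it rests on, to be the only real content of the argument; what follows is bookkeeping.

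It then remains to verify that $\rho$ is an $R$-grading with $R:=\rho(P)\subseteq\mathbb{Z}$. If $x<y$, the lower bound $\ell$ chosen for $x$ lies below $y$ as well, so $\rho(y)-\rho(x)=\lambda(\ell,y)-\lambda(\ell,x)=\lambda(x,y)$, which is positive and equal to $1$ exactly when $x\lessdot y$; hence $\rho$ is strictly monotone and jumps by exactly $1$ across each cover. Since $P$ is discrete, any maximal chain $\mathbf{c}$ restricts on each interval $[c,c']$ with $c,c'\in\mathbf{c}$ to a finite saturated chain, so $\rho$ carries $\mathbf{c}$ isomorphically onto a set of consecutive integers. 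Finally one checks that all maximal chains have the same image: the common-lower-bound hypothesis forces any minimal element of $P$ to be a bottom element $\hat{0}$, and a least element of a maximal chain is minimal in $P$; hence either $P$ has a $\hat{0}$, which then lies on every maximal chain and realizes the least $\rho$-value, or no maximal chain has a least element, and dually at the top. In each of the four resulting cases the image of an arbitrary maximal chain is pinned down to the same subset $R$ of $\mathbb{Z}$ (a finite integer interval, a ray, or all of $\mathbb{Z}$); since every element of $P$ lies on some maximal chain, $\rho$ takes values in $R$ and restricts to an isomorphism onto $R$ on every maximal chain, which is exactly the desired $R$-grading.
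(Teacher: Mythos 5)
Your proposal is correct and follows essentially the same route as the paper's proof: fix a base point, define $\rho$ by comparing heights above a common lower bound, and verify well-definedness by passing to a common lower bound of two lower bounds. Your closing case analysis (on whether $P$ has a $\hat{0}$ and/or $\hat{1}$, using that minimal elements are global minima under the common-bound hypothesis) just fills in, in more detail, the paper's one-line claim that common upper bounds force all maximal chains to have the same image.
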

\begin{proof}
Pick some $x_{0}\in P$ and define $\rho(x_{0})=0$. Then for any
$y\in P$, there is a lower bound $w$ and an upper bound $z$ of
$x_{0}$ and $y$. We let $\rho(y)=\height[w,y]-\height[w,x_{0}]$.
It is clear that $\rho$ is injective on each maximal chain of $P$,
and any two maximal chains are mapped by $\rho$ onto the same subset
of $\mathbb{Z}$ by the existence of common upper bounds.

It remains to check that $\rho$ is well-defined, i.e., that it doesn't
depend on the choice of $w$. But if we pick some other lower bound
$w'$ of $x_{0}$ and $y$, then there is a lower bound $v$ of $w$
and $w'$, and then $\rho$ restricts to a (well-defined) finite height
grading of $[v,z]$. \end{proof}
\begin{cor}
If $P$ is a discrete poset which is strongly connected or pairwise-locally
strongly connected, then $P$ is $R$-graded for some $R\subseteq\mathbb{Z}$.\end{cor}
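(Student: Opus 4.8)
The plan is to treat the two named hypotheses in turn, the second (pairwise-local) being the easy one.

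Suppose first that $P$ is pairwise-locally strongly connected. By Corollary~\ref{cor:DiscreteSConnLFH} every interval of $P$ is graded of finite height, so $P$ is discrete and locally graded; and by the observation in Section~\ref{sec:Introduction} that in a pairwise-locally strongly connected poset any two elements have a common upper and a common lower bound, the hypotheses of Lemma~\ref{lem:LocallyGradingGivesGlobal} are met. That lemma then produces an $R$-grading with $R\subseteq\mathbb{Z}$.

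Now suppose $P$ is strongly connected. This case does not reduce to the previous one: the four-element fence $a<b>c<d$ is strongly connected but, since $a$ and $d$ have no common bound, is not pairwise-locally strongly connected, so Lemma~\ref{lem:LocallyGradingGivesGlobal} is not directly available. What remains usable is Corollary~\ref{cor:DiscreteSConnLFH}, which says every interval $[a,b]$ of $P$ is graded of finite height; hence $\height$ is additive along chains, every maximal chain $\mathbf{c}$ meets every interval in a finite chain, and $\mathbf{c}$ is therefore order-isomorphic to a subset of $\mathbb{Z}$. Fix a maximal chain $\mathbf{c}_0$, an element $x_0\in\mathbf{c}_0$, and the grading $\rho_0\colon\mathbf{c}_0\to R\subseteq\mathbb{Z}$ with $\rho_0(x_0)=0$. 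I would then spread $\rho_0$ over all of $P$ along the net of maximal chains furnished by strong connectivity: a single exchange $\mathbf{c}\sim\mathbf{c}'$ (which deletes one element $u$ from $\mathbf{c}$ and inserts one element $u'$ to form $\mathbf{c}'$) has $u$ and $u'$ incomparable (otherwise one of the two chains could be enlarged) and, a short check shows, occupying the same position within the common chain $\mathbf{c}\cap\mathbf{c}'$, so it determines a canonical order-isomorphism $\phi_{\mathbf{c},\mathbf{c}'}\colon\mathbf{c}\to\mathbf{c}'$ fixing $\mathbf{c}\cap\mathbf{c}'$; composing such isomorphisms along an exchange path from $\mathbf{c}_0$ to an arbitrary maximal chain $\mathbf{c}$ gives an order-isomorphism $\Phi\colon\mathbf{c}_0\to\mathbf{c}$, and we set $\rho|_{\mathbf{c}}=\rho_0\circ\Phi^{-1}$. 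Since every element lies on some maximal chain, this defines $\rho$ on $P$ once the construction is shown to be unambiguous.

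Well-definedness is the crux, and the step I expect to be hardest. It has two parts. First, $\Phi$ must be independent of the chosen exchange path: two paths differ by a closed walk, whose transport is an order-automorphism of $\mathbf{c}_0$ fixing the intersection of all the maximal chains that occur on the walk; by Lemma~\ref{lem:PairwiseCompChains} that intersection is cofinite in $\mathbf{c}_0$, and an order-automorphism of a chain which fixes a cofinite subset must be the identity (a non-fixed point would generate an infinite orbit). This makes the isomorphisms $\Phi_{\mathbf{c},\mathbf{c}'}$ between maximal chains canonical and composable. Second, if $y$ lies on both $\mathbf{c}$ and $\mathbf{c}'$, then the two candidate values of $\rho(y)$ agree exactly when $\Phi_{\mathbf{c},\mathbf{c}'}(y)=y$; to see this one takes fixed points $p<y<q$ of $\Phi_{\mathbf{c},\mathbf{c}'}$ (possible because its fixed set is cofinite, with minor adjustments when $y$ lies near an end of $\mathbf{c}$, the extreme elements of maximal chains being extreme in $P$), observes that $\Phi_{\mathbf{c},\mathbf{c}'}$ carries the maximal chain $\mathbf{c}\cap[p,q]$ of the graded interval $[p,q]$ rank-preservingly onto $\mathbf{c}'\cap[p,q]$, and notes that the rank of $y$ in either of these chains equals $\height[p,y]$, the same number in both cases; hence $\Phi_{\mathbf{c},\mathbf{c}'}(y)=y$. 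Once this is established, $\rho$ restricts to an order-isomorphism onto $R$ on every maximal chain, and so is the desired $R$-grading.
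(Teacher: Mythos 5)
Your first case (pairwise-locally strongly connected) is exactly the paper's argument. Your second case departs from the paper, and part of your framing is off: the paper \emph{does} reduce the strongly connected case to Lemma \ref{lem:LocallyGradingGivesGlobal} -- not by claiming $P$ is pairwise-locally strongly connected (your fence example correctly shows it need not be), but by adjoining $\hat{0}$ and/or $\hat{1}$: an application of Lemma \ref{lem:PairwiseCompChains} shows that if some pair has no common lower bound then every maximal chain has a least element, so $P\cup\{\hat{0}\}$ is still discrete and strongly connected and the augmented poset has common upper and lower bounds. Your alternative route -- transporting a fixed grading of one maximal chain along exchange sequences -- is mostly sound and rather elegant: the single-exchange map is indeed a canonical order isomorphism (the deleted and inserted elements determine the same cut of the common chain, since comparability would let one chain be enlarged), and your path-independence argument (the transport around a closed walk fixes the intersection of the chains on the walk, which is cofinite in $\mathbf{c}_0$ by Lemma \ref{lem:PairwiseCompChains}, and an order automorphism of a chain fixing a cofinite set is the identity because a non-fixed point has an infinite monotone orbit of non-fixed points) is correct.

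The genuine gap is in the consistency step, precisely where you wave at ``minor adjustments.'' Your anchors $p<y<q$ come only from cofiniteness of the fixed set of $\Phi_{\mathbf{c},\mathbf{c}'}$, and cofiniteness has content only on a side of $y$ where $\mathbf{c}$ is infinite. When just one side fails, the fix really is minor (drop $q$ and count from $p$ alone: both $y$ and $\Phi_{\mathbf{c},\mathbf{c}'}(y)$ occupy position $\height[p,y]$ above $p$ in $\mathbf{c}'$, since $[p,y]$ is graded by Corollary \ref{cor:DiscreteSConnLFH}). But when $\mathbf{c}$ is finite -- and by Lemma \ref{lem:PairwiseCompChains} one finite maximal chain forces all of them to be finite, so this is exactly the finite-height case -- the fixed set you can certify (the intersection of the chains along your exchange path) may be empty, and your parenthetical remedy does not supply anchors: the extreme elements of $\mathbf{c}$ and of $\mathbf{c}'$ are maximal/minimal in $P$, but they need not coincide and need not be fixed by $\Phi_{\mathbf{c},\mathbf{c}'}$, so they give no common reference point for the rank comparison. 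Thus well-definedness of $\rho$ is unproven in the finite-height case, which is the heart of what must be shown there. The natural repair is the paper's own device: when the maximal chains are bounded below (above), they all have least (greatest) elements, so one may adjoin $\hat{0}$ ($\hat{1}$) while preserving discreteness and strong connectivity; $\hat{0}$ and $\hat{1}$ then lie on \emph{every} maximal chain, are fixed by every exchange and hence by every transport, and serve as the missing anchors -- at which point, in the finite-height case, the transport machinery is no longer even needed, since one can simply set $\rho(y)=\height[\hat{0},y]$ using Corollary \ref{cor:DiscreteSConnLFH}.
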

\begin{proof}
For $P$ pairwise-locally strongly connected this is immediate by
Lemma \ref{lem:LocallyGradingGivesGlobal} with Corollary \ref{cor:DiscreteSConnLFH}.

For $P$ strongly connected, it is clear that all maximal chains are
isomorphic. If some pair of elements $x$ and $y$ fail to have a
common lower bound, then all maximal chains have a least element by
an easy application of Lemma \ref{lem:PairwiseCompChains}. Then $P\cup\{\hat{0}\}$
is discrete, and $\hat{0}$ is a lower bound for all elements. Similarly
for upper bounds. 

We augment $P$ by $\hat{0}$ and/or $\hat{1}$ if necessary (as in
the preceding paragraph) to obtain a discrete poset $\widehat{P}$
where every pair of elements has a common upper and lower bound. Since
adding/deleting $\hat{0}$ and/or $\hat{1}$ preserves strong connectivity,
we obtain from Lemma \ref{lem:LocallyGradingGivesGlobal} with Corollary
\ref{cor:DiscreteSConnLFH} that $\widehat{P}$ is $R$-graded. The
conclusion follows for $P$.
\end{proof}

\section{Relationships with geometric combinatorics\label{sec:Connections}}

\subsection{Locally connected posets and their order complexes\label{sub:OrderComplexes}}

The name ``strongly connected'' comes from a relationship with geometric
combinatorics. 

Recall that an \emph{(abstract) simplicial complex} is a family of
finite sets (called \emph{faces}) on a base set (called \emph{vertices})
which is closed under inclusion, and that any abstract simplicial
complex can be treated as a topological space (specifically, a cell
complex -- see e.g. \cite{Hatcher:2002}) by identifying each face
$\sigma$ having $(n+1)$ points with an $n$-dimensional simplex.
The \emph{dimension} of a simplicial complex is the supremum over
all faces of the dimension of this corresponding simplex. 

A finite dimensional simplicial complex is \emph{strongly connected}
if for every pair of \emph{facets} (maximal faces) $\sigma$ and $\tau$,
there is a sequence of facets between $\sigma$ and $\tau$ with each
adjacent pair having symmetric difference 2. For example, any simplicial
complex realizing a manifold is strongly connected.

Associated with any finite height poset $P$ is a simplicial complex
$\Delta(P)$ (the \emph{order complex}), where the vertices are the
elements of $P$, and the faces consist of all chains in $P$ \cite{Wachs:2007}.
The poset properties of $P$ are closely related to the topological
properties of $\Delta(P)$: for example, the Möbius number $\mu_{P}(x,y)$
of a finite interval $[x,y]$ is exactly the reduced Euler characteristic
$\tilde{\chi}(\Delta(x,y))$. We remark that bounded posets are contractible
(such a poset $P$ is a topological cone over $\Delta(P\setminus\{\hat{0}\})$),
so it is natural to consider open intervals or $\Delta(P\setminus\{\hat{0},\hat{1}\})$.

\medskip{}
In particular, it is clear that a finite height poset $P$ is strongly
connected in our sense if and only if $\Delta(P)$ is strongly connected
in the simplicial complex sense. 

\medskip{}

We say that a poset $P$ is \emph{connected }if there is a sequence
$x=x_{0}\leq x_{1}\geq x_{2}\leq\dots x_{n}=y$ between any two elements
$x$ and $y$. In the finite height case, this is equivalent to $\Delta(P)$
being path-connected; and in the discrete case it is equivalent to
the Hasse diagram being a connected graph. As before, a bounded poset
or closed interval is always connected, so to obtain non-trivial statements
we remove $\hat{0}$ and $\hat{1}$ or consider open intervals.

The following generalization of \cite[Proposition 11.7 and following]{Bjorner:1995}
(see also \cite[Lemma 4.2]{Klee:2009}) to infinite posets is proved
entirely similarly to the finite case:
\begin{lem}
\label{lem:LocalConnGivesStrongConn}If $P$ is a bounded poset of
finite height such that every open interval $(x,y)$ in $P$ of nonzero
height is connected, then $P$ is strongly connected.
\end{lem}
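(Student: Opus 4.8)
The plan is to induct on the height $n$ of $P$. The cases $n \le 2$ are trivial and do not use the hypothesis: for $n \le 1$ there is a unique maximal chain, and for $n = 2$ every maximal chain has the form $\{\hat{0}, a, \hat{1}\}$ for an atom $a$, so any two maximal chains have symmetric difference at most $2$. So assume $n \ge 3$ and that the lemma holds for all bounded posets of smaller height satisfying the hypothesis; note that this hypothesis is inherited by every closed interval $[x,y]$ of $P$, since an open interval of $[x,y]$ is literally an open interval of $P$. Fix maximal chains $\mathbf{c}$ and $\mathbf{d}$ and let $a$ and $a'$ be their unique atoms. I would reduce the problem to two moves: (i) joining $\mathbf{c}$ to some maximal chain through $a'$, and (ii) joining two maximal chains that pass through a common atom.

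Move (ii) is the easy one. If $\mathbf{m}$ and $\mathbf{n}$ both contain an atom $b$, then $\mathbf{m} \setminus \{\hat{0}\}$ and $\mathbf{n} \setminus \{\hat{0}\}$ are maximal chains of $[b, \hat{1}]$, which is bounded of height at most $n-1$. By the inductive hypothesis $[b, \hat{1}]$ is strongly connected, so these chains are joined by symmetric-difference-$2$ exchanges inside $[b, \hat{1}]$; re-adjoining $\hat{0}$ turns them into the same exchanges in $P$.

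Move (i) is where the hypothesis does its work. Since $n \ge 3$, the open interval $(\hat{0}, \hat{1})$ is nonempty of height $n-2 \ge 1$, hence connected, so there is a sequence $a = x_{0} \le x_{1} \ge x_{2} \le \dots x_{m} = a'$ of elements of $(\hat{0}, \hat{1})$. After trimming redundant terms — an element of $(\hat{0},\hat{1})$ lying below an atom must equal that atom, so any dip below the atom level at an end can be removed — we may assume the even-indexed terms $x_{0}, x_{2}, \dots, x_{m}$ run from $a$ to $a'$. Choosing an atom $b_{2i} \le x_{2i}$ below each of them yields atoms $a = b_{0}, b_{2}, \dots, b_{m} = a'$ in which consecutive atoms $b_{2i}$ and $b_{2i+2}$ both lie below $u = x_{2i+1} \in (\hat{0}, \hat{1})$. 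Thus it suffices to join a maximal chain through one atom to a maximal chain through another whenever both atoms lie below a common $u < \hat{1}$: apply the inductive hypothesis to $[\hat{0}, u]$ (bounded, height at most $n-1$) to join a maximal chain of $[\hat{0},u]$ through the first atom to one through the second by symmetric-difference-$2$ exchanges, and extend every chain of that sequence by one fixed maximal chain of $[u, \hat{1}]$.

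Combining the moves: by (ii), $\mathbf{c}$ is joined to the maximal chain through $b_{0}=a$ produced by the first application of (i); that application joins it to a chain through $b_{2}$; by (ii) this is joined to the chain through $b_{2}$ used in the next application of (i); continuing, we reach a maximal chain through $b_{m} = a'$, which by (ii) is joined to $\mathbf{d}$. Every step is a symmetric-difference-$2$ exchange, so $P$ is strongly connected. I expect the only genuine obstacle to be the bookkeeping in move (i) — turning the connectivity zig-zag inside $(\hat{0}, \hat{1})$ into a chain of atoms with pairwise common upper bounds that are \emph{strictly} below $\hat{1}$, so that the induction on the lower intervals $[\hat{0}, u]$ really applies; everything else is either trivial or a direct appeal to the inductive hypothesis on a smaller interval. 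Passing from Björner's finite setting to the infinite case costs nothing: all chains in sight are finite because $P$ has finite height, and the one existence fact used — that every element above $\hat{0}$ lies above some atom — holds in any poset of finite height.
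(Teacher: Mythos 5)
Your proof is correct, and it is essentially the argument the paper has in mind: the paper omits the proof of this lemma, stating only that it is ``proved entirely similarly to the finite case'' of Bj\"orner's Proposition 11.7 (see also Klee's Lemma 4.2), and that finite-case argument is precisely your induction on height, using connectivity of the open interval $(\hat{0},\hat{1})$ to walk between atoms and the inductive hypothesis on the intervals $[b,\hat{1}]$ and $[\hat{0},u]$ to perform the exchanges. Your parity/trimming bookkeeping and the observation that finite height supplies atoms and chain extensions in the infinite setting are exactly the (routine) details needed to carry the finite proof over.
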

We see from Example \ref{exa:SConnNotLocally} that the sufficient
condition of Lemma \ref{lem:LocalConnGivesStrongConn} is not necessary.
We further comment that the result of Lemma \ref{lem:LocalConnGivesStrongConn}
fails to hold in posets with infinite height, as the following example
demonstrates:
\begin{example}
\label{exa:LocallySConnNotSConn}Consider the lattice $L=\left(\mathbb{N}\times\mathbb{N}\right)\cup\hat{1}$.
Every closed interval is isomophic to either $[n]\times[m]$ or to
$L$ itself, hence every open interval of nonzero height is connected.
But there is no strongly connected sequence interpolating between
the chain $\{(a,0)\,:\, a\in\mathbb{N}\}$ and $\{(0,b)\,:\, b\in\mathbb{N}\}$,
as every interval of the form $[(a_{0},0),(a_{n},0)]$ is a chain.

We comment that since $\mathbb{N}\times\mathbb{N}$ is locally strongly
connected and hence pairwise-locally strongly connected, the antichain
cutsets of $L$ nonetheless consist exactly of the level sets.\end{example}
\begin{rem}
Theorem \ref{thm:MainThm-strong} gives a connection between the combinatorics
of antichain cutsets in $P$ and the topology of $\Delta(P)$. Bell
and Ginsburg \cite{Bell/Ginsburg:1984} (see also \cite{Ginsburg/Rival/Sands:1986})
gave another topological condition, showing that a certain space $M(P)$
is compact if and only if every $x\in P$ is in some finite antichain
cutset. We are not aware of any connection between the order complex
and the topology considered by Bell and Ginsburg.
\end{rem}

\subsection{$EL$-labelings}

An \emph{edge labeling} of a locally finite height poset $P$ is a
map $\lambda$ from the cover relations of $P$ (i.e., the edges of
the Hasse diagram) to some partially ordered set $\Lambda$, usually
the integers. Edge labelings associate a word of elements from $\Lambda$
with every maximal chain on an interval of $P$, and we order such
chains lexicographically by their associated words. We say that $\lambda$
has an \emph{ascent} at $y$ if $\lambda(x\lessdot y)\leq\lambda(y\lessdot z)$,
and a \emph{descent} otherwise; a chain is \emph{ascending} if every
element other than the top and bottom is an ascent.

An \emph{$EL$-labeling} of a locally finite height poset $P$ is
an edge labeling such that on every interval $[x,y]$ of $P$:
\begin{enumerate}
\item There is a unique ascending maximal chain $\mathbf{a}^{[x,y]}$.
\item The ascending chain $\mathbf{a}^{[x,y]}$ lexicographically precedes
all other maximal chains on $[x,y]$.
\end{enumerate}
When $P$ is infinite we also require the following, which we note
to be automatic in the finite case:
\begin{enumerate}
\item [(3)] The lexicographic order of maximal chains on $[x,y]$ has a
linear extension which is a well-order.\end{enumerate}
\begin{example}
In a finite height semimodular lattice $L$, any (strict) well-ordering
$\prec$ of the join irreducible elements such that $x<y$ implies
$x\prec y$ induces an $EL$-labeling with label set $L$. The labeling
assigns to $x\lessdot y$ the first join irreducible $z$ such that
$x\vee z=y$ \cite[Proposition 2.2]{Stanley:1974}. 
\end{example}
A precursor to $EL$-labelings was introduced by Stanley \cite{Stanley:1974}
as a purely combinatorial description of some similar behavior between
geometric lattices and subgroup lattices of supersolvable groups,
and the definition as above was made and studied by Björner \cite{Bjorner:1980},
and Björner and Wachs \cite{Bjorner/Wachs:1996}. The existence of
an $EL$-labeling has strong consequences for the topology of $\Delta(P)$,
and we recommend \cite{Wachs:2007} for a highly readable account
of this. 

One such consequence is the following:
\begin{lem}
\label{lem:EL-shellableStrConn}If $P$ is a bounded poset which is
graded of finite height and admits an $EL$-labeling, then $P$ is
strongly connected.\end{lem}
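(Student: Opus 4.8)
The plan is to prove that every maximal chain of $P$ can be joined to the unique ascending maximal chain $\mathbf{a}=\mathbf{a}^{[\hat{0},\hat{1}]}$ by a finite sequence of maximal chains, each obtained from the previous by a single-element exchange; since $\mathbf{a}$ serves as a common endpoint, any two maximal chains are then linked through $\mathbf{a}$, and $P$ is strongly connected. A preliminary observation makes the local moves legitimate: because $P$ is graded of finite height, any window $x\lessdot y\lessdot z$ occurring inside a maximal chain spans an interval $[x,z]$ of height exactly $2$, so replacing $y$ by the middle element $y'$ of any other maximal chain of $[x,z]$ again yields a maximal chain of $P$, differing from the original only in that one element --- i.e., with symmetric difference $2$.

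The engine of the argument is the standard lexicographic-descent move. Suppose $\mathbf{m}\ne\mathbf{a}$ is a maximal chain; by uniqueness of the ascending chain (condition (1)), $\mathbf{m}$ has a descent, that is, consecutive elements $x\lessdot y\lessdot z$ with $\lambda(x\lessdot y)>\lambda(y\lessdot z)$. Let $x\lessdot y'\lessdot z$ be the unique ascending maximal chain of $[x,z]$; then $y'\ne y$ (as $x\lessdot y\lessdot z$ is not ascending), and swapping $y$ for $y'$ produces a maximal chain $\mathbf{m}'$ of $P$ with $\vert\mathbf{m}\bigtriangleup\mathbf{m}'\vert=2$. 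Now $\mathbf{m}$ and $\mathbf{m}'$ agree below $x$ and above $z$, and by condition (2) the ascending chain on $[x,z]$ lexicographically precedes the non-ascending chain $x\lessdot y\lessdot z$; hence the label word of $\mathbf{m}'$ is lexicographically strictly smaller than that of $\mathbf{m}$. This is essentially the first half of Bj\"orner's proof that the lexicographic order on maximal chains is a shelling order.

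To turn this into a legitimate induction I would invoke condition (3): fix a well-order $\preceq$ on the maximal chains of $[\hat{0},\hat{1}]=P$ (here boundedness is used) that refines the lexicographic order. Its $\preceq$-minimum is $\mathbf{a}$ by conditions (1)--(2), and the descent move sends any $\mathbf{m}\ne\mathbf{a}$ to a strictly $\preceq$-smaller chain $\mathbf{m}'$; transfinite induction then connects $\mathbf{m}'$, hence $\mathbf{m}$, to $\mathbf{a}$. The point where care is genuinely needed --- and the step I expect to be the main obstacle --- is precisely this termination issue when $P$ is infinite: there may be infinitely many maximal chains of the fixed finite length, so a naive descending induction need not halt, and it is condition (3) that rescues the argument. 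In the finite-width case condition (3) is automatic, and one may instead induct on the lexicographic rank of $\mathbf{m}$ or on its number of descents; the remaining verifications ($[x,z]$ has height $2$, $\mathbf{m}'$ is a maximal chain, $y'\ne y$) are routine consequences of gradedness and the $EL$-axioms.
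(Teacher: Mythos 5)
Your proposal is correct and follows essentially the same route as the paper: reduce to connecting an arbitrary maximal chain to the unique ascending chain $\mathbf{a}^{[\hat{0},\hat{1}]}$, perform the descent-swap on a height-two interval to get a lexicographically smaller chain at symmetric difference two, and use condition (3) (the well-ordering of the lexicographic order) to guarantee termination. Your added remarks (gradedness ensures the swapped chain is maximal, $y'\neq y$, transfinite induction in place of ``no infinite descent'') only make explicit details the paper leaves implicit.
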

\begin{proof}
It suffices to show that there is a sequence as in the definition
of strong connectivity from any chain $\mathbf{c}$ to the unique
ascending chain $\mathbf{a}^{[\hat{0},\hat{1}]}$. Suppose that $\mathbf{c}=\{\hat{0}=y_{0}\lessdot y_{1}\lessdot\dots\lessdot y_{n}=\hat{1}\}$
has a descent at $y_{i}$. Then applying the $EL$-labeling property
on $[y_{i-1},y_{i+1}]$ allows us to replace $y_{i}$ with a $y_{i}'$
so that $\mathbf{c}'=(\mathbf{c}\setminus y_{i})\cup y_{i}'$ has
an ascent at $y_{i}'$, and we notice that $\mathbf{c}$ and $\mathbf{c}'$
have symmetric difference two. Since $\mathbf{c}'$ lexicographically
precedes $\mathbf{c}$, repeating this process leaves us with $\mathbf{a}^{[\hat{0},\hat{1}]}$
after a finite number of steps (by the well-ordering condition).\end{proof}
\begin{cor}
If $P$ is a locally finite height $R$-graded lattice with an $EL$-labeling,
then the antichain cutsets of $P$ are exactly the level sets.\end{cor}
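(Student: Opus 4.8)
The plan is to reduce to Theorem~\ref{thm:Main2LocallyConn} by showing that $P$ is pairwise-locally strongly connected. Since $P$ is a lattice and strong connectivity is a property closed under passing to subintervals, it suffices (as observed in the terminology subsection) to verify the a priori stronger condition that $P$ is \emph{locally} strongly connected, i.e., that every closed interval $[x,y]$ of $P$ is strongly connected.

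So I would fix an interval $[x,y]$ and check that Lemma~\ref{lem:EL-shellableStrConn} applies to it. First, $[x,y]$ is a bounded poset which is graded of finite height: it is bounded by $x$ and $y$ by construction, it has finite height because $P$ has locally finite height, and it is graded because the $R$-grading $\rho$ of $P$ restricts to an $R$-grading of $[x,y]$, which for a finite-height poset is equivalent to gradedness in the usual sense. Second, the $EL$-labeling $\lambda$ of $P$ restricts to an $EL$-labeling of $[x,y]$. The point here is that the cover relations of $[x,y]$ are exactly the cover relations of $P$ between two elements of $[x,y]$ (if $a<c<b$ in $P$ with $a,b\in[x,y]$ then $c\in[x,y]$), and likewise every subinterval of $[x,y]$ is a subinterval of $P$; hence conditions (1), (2), and (3) in the definition of $EL$-labeling hold for $[x,y]$ because they already hold for the corresponding intervals of $P$.

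With those two points established, Lemma~\ref{lem:EL-shellableStrConn} gives that $[x,y]$ is strongly connected. As $[x,y]$ was arbitrary, $P$ is locally strongly connected, hence pairwise-locally strongly connected; and since $P$ is $R$-graded by hypothesis, Theorem~\ref{thm:Main2LocallyConn} then yields that the antichain cutsets of $P$ are exactly the level sets.

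The only step needing genuine care is the claim that $\lambda$ restricts to an $EL$-labeling, and within that the well-ordering condition (3) in the infinite case. This should be straightforward: the lexicographic order on maximal chains of $[x,y]$ is literally the restriction of the lexicographic order on those same chains viewed in $P$, so a well-ordered linear extension of the latter restricts to one of the former. Everything else is routine bookkeeping with cover relations.
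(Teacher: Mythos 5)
Your proposal is correct and follows essentially the same route as the paper, whose proof is simply ``apply Lemma~\ref{lem:EL-shellableStrConn}, then Theorem~\ref{thm:Main2LocallyConn}''; you have just spelled out the implicit intermediate step that the lemma is applied to each interval $[x,y]$ (using the restricted grading and $EL$-labeling) to get pairwise-local strong connectivity before invoking the theorem.
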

\begin{proof}
Apply Lemma \ref{lem:EL-shellableStrConn}, then Theorem \ref{thm:Main2LocallyConn}.\end{proof}
\begin{rem}
Although the proofs become less elementary, the consequence of Lemma
\ref{lem:EL-shellableStrConn} holds more generally for the class
of Cohen-Macaulay posets, as discussed below in Section \ref{sub:Shellable-and-Cohen-Macaulay}. 
\end{rem}

\subsection{Shellable and Cohen-Macaulay complexes \label{sub:Shellable-and-Cohen-Macaulay}}

We very briefly discuss some closely related properties implied by
the existence of an $EL$-labeling. For further background, refer
to \cite{Wachs:2007} for finite posets, and \cite[especially Remark 4.21]{Bjorner:1984}
for the infinite (but finite height) case. Another related work \cite{Aramova/Herzog/Hibi:2002}
considers locally finite shellable posets from an algebraic point
of view.

A finite height poset is \emph{shellable} if there is a well-ordering
$\prec$ (called a \emph{shelling}) of the maximal chains of $P$
such that if $\mathbf{c}\prec\mathbf{d}$ then there is some $\mathbf{c}'\prec\mathbf{d}$
with $\mathbf{c}\cap\mathbf{d}\subseteq\mathbf{c}'\cap\mathbf{d}=\mathbf{d}\setminus\{x\}$
for some $x$. Every interval of a shellable poset is also shellable.
For example, if $P$ is a bounded poset of finite height admitting
an $EL$-labeling, then (a well-ordered linear extension of) the lexicographic
order is a shelling. As a result, posets with an $EL$-labeling are
often referred to as \emph{$EL$-shellable.} The proof of Lemma \ref{lem:EL-shellableStrConn}
adapts straightforwardly to shellable posets.

A finite height poset is \emph{Cohen-Macaulay} if it obeys a certain
technical condition involving vanishing homology groups in low dimension
for every interval -- the details will not be important here, except
to note that every graded shellable poset is Cohen-Macaulay. A Cohen-Macaulay
poset has the property that every open interval of nonzero height
is connected, hence by Lemma \ref{lem:LocalConnGivesStrongConn} a
Cohen-Macaulay poset is strongly connected.

We summarize the chain of implications for a finite height graded
poset:
\begin{align*}
\mbox{bounded + admits }EL\mbox{-labeling}\implies\mbox{shellable}\implies & \mbox{Cohen-Macaulay}\\
\implies & \mbox{strongly connected},
\end{align*}
and comment that each implication is strict.
\begin{cor}
If $P$ is a Cohen-Macaulay poset (more generally, a pairwise-locally
Cohen-Macaulay poset), then the antichain cutsets of $P$ are exactly
the level sets.
\end{cor}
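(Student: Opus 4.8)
The plan is to derive the corollary purely by assembling the theorems of Section~\ref{sec:Proof-of-Main}, using Cohen--Macaulayness only to produce the combinatorial inputs those theorems require; no new topology enters beyond what the excerpt has already extracted.

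Consider first the case that $P$ itself is Cohen--Macaulay. Then $P$ is graded of finite height, hence discrete (in a finite-height poset every interval contains a finite maximal chain), and it is strongly connected, as observed just before the statement. Theorem~\ref{thm:MainThm-weakgraded} then applies directly and shows that the antichain cutsets of $P$ are exactly the level sets.

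Now suppose $P$ is pairwise-locally Cohen--Macaulay. Given $x,y\in P$, choose an interval $I=[a,b]$ with $x,y\in I$ and $I$ Cohen--Macaulay. Being a closed interval, $I$ is bounded; being Cohen--Macaulay, it is of finite height, and every open subinterval of $I$ of nonzero height is connected (as recalled before the statement). Hence Lemma~\ref{lem:LocalConnGivesStrongConn} shows $I$ strongly connected, and therefore $P$ is pairwise-locally strongly connected. Moreover $P$ is discrete: for $x\le y$ the interval $[x,y]$ sits inside such an $I$, so it has finite height and a finite maximal chain. By Corollary~\ref{cor:DiscreteSConnLFH} every interval of $P$ is then graded of finite height, and, since pairwise-local strong connectivity forces every pair of elements to have a common upper and a common lower bound, Lemma~\ref{lem:LocallyGradingGivesGlobal} shows that $P$ is $R$-graded for some $R\subseteq\mathbb{Z}$. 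Theorem~\ref{thm:Main2LocallyConn} now gives the conclusion.

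I do not expect a genuine obstacle here, since the argument is an assembly of earlier results; the one point needing care is that the Cohen--Macaulay property is inherited by closed subintervals (its definition quantifies over all intervals of the poset). This is what guarantees that the pairwise-local hypothesis really yields, around each pair of elements, a \emph{bounded}, \emph{finite-height-graded}, \emph{strongly connected} interval --- precisely the data consumed by Lemma~\ref{lem:LocalConnGivesStrongConn} and Lemma~\ref{lem:LocallyGradingGivesGlobal}.
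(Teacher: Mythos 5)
Your proposal is correct and follows the route the paper intends: use the observation that Cohen--Macaulayness makes (closed intervals of) $P$ strongly connected via Lemma~\ref{lem:LocalConnGivesStrongConn}, then feed this into Theorem~\ref{thm:MainThm-weakgraded} (respectively, Theorem~\ref{thm:Main2LocallyConn} together with the $R$-gradedness coming from Corollary~\ref{cor:DiscreteSConnLFH} and Lemma~\ref{lem:LocallyGradingGivesGlobal}). The only difference is that you spell out explicitly the steps the paper leaves implicit, including re-deriving the unnamed corollary that a discrete pairwise-locally strongly connected poset is $R$-graded, which you could simply have cited.
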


\subsection{Examples}

There is a vast literature on finite or locally finite posets and
lattices with $EL$-labelings and/or shellings, as we touch upon in
Proposition \ref{pro:ACClevelsetsExamples}.
\begin{example}[Discrete semimodular lattices]
 As previously mentioned, any finite height semimodular lattice admits
an $EL$-labeling. Thus, a discrete (equivalently locally finite height)
semimodular lattice has an $EL$-labeling in every interval, so is
locally strongly connected. As such a lattice is well-known to be
$R$-graded (for some $R\subseteq\mathbb{Z}$), the antichain cutsets
of a discrete semimodular lattice are exactly the level sets. Thus
the result of \cite{Foldes:2011UNP} is recovered.

As a special case, the lattice of flats of any matroid has antichain
cutsets consisting exactly of its level sets.
\end{example}
The \emph{subgroup lattice} $L(G)$ of a group $G$ consists of all
subgroups of $G$, ordered by inclusion.
\begin{example}[Supersolvable lattices]
 A finite height lattice is \emph{supersolvable} if it has height
$n$, and has an $EL$-labeling which labels every maximal chain with
a permutation of $[n]$ \cite{Stanley:1972,McNamara/Thomas:2006}.
It follows that a supersolvable lattice is necessarily graded, and
so has antichain cutsets consisting exactly of the level sets. Examples
of supersolvable lattices include subgroup lattices of finite supersolvable
groups, as well as partition lattices \cite{Stanley:1972}.
\end{example}
In particular, the subspace lattice of a finite vector space is both
(semi-)modular and supersolvable. Hence we immediately obtain a $q$-analogue
of \cite[Theorem 4]{Rival/Zaguia:1985}: the antichain cutsets of
the subspace lattice of any finite vector space are exactly the level
sets.

We also obtain that the subgroup lattice of $\mathbb{Z}$ has antichain
cutsets consisting exactly of its level sets, as $L(\mathbb{Z})$
is $(\mathbb{Z}^{-}\cup\hat{0})$-graded and every interval of $L(\mathbb{Z})\setminus\{\hat{0}\}$
is finite and supersolvable (and indeed distributive). 
\begin{example}[Bruhat order]
 Let $W$ be a Coxeter group (see \cite{Bjorner/Brenti:2005}) generated
by involutions $S$, and let $\ell(w)$ be the minimal length word
expressing $w\in W$ as the product of generators from $S$. The \emph{Bruhat
order} associated with $(W,S)$ has a cover relation between $w$
and $wt$ if $t$ is conjugate to some $s\in S$ and $\ell(wt)=\ell(w)+1$.
Bruhat orders have locally finite height \cite[Corollary 2.2.4]{Bjorner/Brenti:2005},
are locally and pairwise-locally shellable \cite[Proposition 2.2.9 and Corollary 2.7.5]{Bjorner/Brenti:2005},
and are graded by $\ell$, hence their antichain cutsets are exactly
the level sets.
\end{example}

\section{Generalization to hypergraphs\label{sec:Generalization-to-hypergraphs}}

We remark briefly that our proof of Theorem \ref{thm:MainThm-weak}
has a generalization to $d$-uniform hypergraphs. Recall that a \emph{transversal}
of a hypergraph is a subset $S$ of vertices that has nonempty intersection
with every edge. General background on transversals can be found in
\cite{Berge:1989}. An \emph{exact transversal} is a transversal that
meets each edge in exactly one vertex. Thus, an antichain cutset for
a poset $P$ is an exact transversal for the hypergraph with vertex
set $P$ and edge set consisting of the maximal chains of $P$. 

We define a $d$-uniform hypergraph $\mathcal{H}$ to be \emph{strongly
connected} if for every pair $e$ and $f$ of edges of $\mathcal{H}$
there is a sequence 
\[
e=e^{(0)},e^{(1)},\dots,e^{(n)}=f
\]
 such that every adjacent pair has symmetric difference 2. The simplicial
complex $\left\langle \mathcal{H}\right\rangle $ \emph{generated
by $\mathcal{H}$ }has faces consisting of all subsets of any edge
of $\mathcal{H}$. It is clear that $\mathcal{H}$ is strongly connected
if and only if $\left\langle \mathcal{H}\right\rangle $ is strongly
connected in the simplicial complex sense discussed in Section \ref{sub:OrderComplexes}.

The argument leading to Theorem \ref{thm:MainThm-weak} translates
straightforwardly to:
\begin{thm}
\label{thm:HypGen}If $\mathcal{H}$ is a strongly connected $d$-uniform
hypergraph, then the exact transversals of $\mathcal{H}$ are pairwise
disjoint.
\end{thm}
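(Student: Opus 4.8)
The plan is to transcribe the proof of Theorem~\ref{thm:MainThm-strong} into the language of hypergraphs; because $\mathcal{H}$ is itself strongly connected, the interval-restriction step used there is unnecessary. Suppose, for contradiction, that $A$ and $B$ are distinct exact transversals of $\mathcal{H}$ with $A\cap B\neq\emptyset$; interchanging $A$ and $B$ if needed, fix vertices $z\in B\setminus A$ and $y\in A\cap B$. For each edge $e$ let $\alpha(e)$ and $\beta(e)$ be the unique vertices of $e\cap A$ and $e\cap B$ (these exist since $A$ and $B$ are exact transversals). Partition the edges of $\mathcal{H}$ into $\mathcal{M}=\{e:\beta(e)\in A\}$ and $\mathcal{N}=\{e:\beta(e)\in B\setminus A\}$, and note that $e\in\mathcal{M}$ exactly when $\alpha(e)=\beta(e)$.

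First I would check that $\mathcal{M}$ and $\mathcal{N}$ are both nonempty: every edge through $y$ lies in $\mathcal{M}$ and every edge through $z$ lies in $\mathcal{N}$, so it suffices that $y$ and $z$ each lie in some edge. This is automatic when $\mathcal{H}$ is the hypergraph of maximal chains of a poset, and is in any case needed for the statement---if a vertex $v$ lies in no edge then $A\cup\{v\}$ is an exact transversal distinct from $A$ yet not disjoint from it---so I would read Theorem~\ref{thm:HypGen} with the understanding that every vertex lies in some edge. Next I would apply the strongly connected hypothesis to an edge of $\mathcal{M}$ and an edge of $\mathcal{N}$, obtaining a sequence $e^{(0)},\dots,e^{(n)}$ of edges with $e^{(0)}\in\mathcal{M}$, $e^{(n)}\in\mathcal{N}$, and $\vert e^{(i)}\triangle e^{(i+1)}\vert=2$ for all $i$. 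Since $\{\mathcal{M},\mathcal{N}\}$ partitions the edge set, some consecutive pair of this sequence consists of an $m\in\mathcal{M}$ and an $n\in\mathcal{N}$.

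The heart of the matter is then the short counting step from Theorem~\ref{thm:MainThm-strong}. Set $x=\alpha(m)=\beta(m)$, and $a=\alpha(n)$, $b=\beta(n)$, so $a\in A$ and $b\in B\setminus A$; in particular $a\neq b$. If $x\in n$, then $x\in n\cap A$ forces $x=a$ while $x\in n\cap B$ forces $x=b$, contradicting $a\neq b$; hence $x\in m\setminus n$. Likewise, $a\in m$ would force $a=x$ (as $x$ is the only vertex of $m$ in $A$), hence $x=a\in n$, already excluded; and $b\in m$ would force $b=x$, again putting $x\in n$. So $a,b\in n\setminus m$, and therefore $x,a,b$ are three distinct elements of $m\triangle n$---contradicting $\vert m\triangle n\vert=2$. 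This contradiction establishes the theorem.

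I do not anticipate a genuine obstacle here; the only points that repay care are the nonemptiness of $\mathcal{M}$ and $\mathcal{N}$ (the isolated-vertex remark above) and keeping the small case analysis of the counting step straight. It is worth observing that $d$-uniformity is not really used in the proof---it only serves to make ``symmetric difference $2$'' the natural notion of adjacency---what the argument actually needs is merely that each edge is a set met by each of $A$ and $B$ in exactly one vertex.
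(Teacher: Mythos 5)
Your proof is correct and is essentially the paper's own argument: the paper proves Theorem~\ref{thm:HypGen} precisely by translating the proof of Theorem~\ref{thm:MainThm-strong} (partition the edges into $\mathcal{M}$ and $\mathcal{N}$, use strong connectivity to find an adjacent pair straddling the partition, and run the three-element count on the symmetric difference), with the interval-restriction step dropped exactly as you do. Your side remark that every vertex must lie in some edge (automatic in the poset setting via maximal chains) is a reasonable reading of the implicit hypothesis and does not change the argument.
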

The definitions for shellability and Cohen-Macaulay that we presented
in Section \ref{sub:Shellable-and-Cohen-Macaulay} are specialized
from more general definitions of simplicial complexes, and strong
connectivity still holds for such.
\begin{cor}
If $\left\langle \mathcal{H}\right\rangle $ is pure shellable or
Cohen-Macaulay, then the exact transversals of $\mathcal{H}$ are
pairwise disjoint.
\end{cor}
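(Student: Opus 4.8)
The plan is to reduce the corollary to Theorem~\ref{thm:HypGen} by showing that, if $\left\langle \mathcal{H}\right\rangle$ is pure shellable or pure Cohen--Macaulay, then $\mathcal{H}$ is strongly connected in the sense defined above. Here purity is essential: it guarantees that the facets of $\left\langle \mathcal{H}\right\rangle$ all have the same cardinality $d$, so they are exactly the edges of $\mathcal{H}$, and the simplicial-complex notion of strong connectivity for $\left\langle \mathcal{H}\right\rangle$ (a facet-adjacency sequence with consecutive symmetric differences of size $2$) coincides verbatim with the hypergraph notion of strong connectivity for $\mathcal{H}$.

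First I would handle the shellable case. Recall that a shelling of a pure complex is a linear (well-)order $F_1, F_2, \dots$ of its facets such that for each $k>1$ the intersection of $F_k$ with $\bigcup_{j<k}F_j$ is a nonempty union of facets of $\partial F_k$; equivalently, there is some $j<k$ with $F_j\cap F_k=F_k\setminus\{v\}$ for a single vertex $v$. From this one builds a strongly connected sequence between any two facets exactly as in the proof of Lemma~\ref{lem:EL-shellableStrConn}: going down the shelling order from a given facet $F_k$, repeatedly replace it by a predecessor $F_j$ sharing all but one vertex, which is a symmetric-difference-$2$ move, until one reaches $F_1$; concatenating such paths (and reversing one of them) connects any pair of facets. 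The well-ordering hypothesis in the infinite case ensures this descent terminates, just as in Lemma~\ref{lem:EL-shellableStrConn}.

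For the Cohen--Macaulay case I would invoke the standard fact, already used in the poset setting via Lemma~\ref{lem:LocalConnGivesStrongConn}, that a pure Cohen--Macaulay complex of dimension $\geq 1$ is connected in codimension~$1$: its \emph{dual graph} (vertices the facets, edges between facets meeting in a codimension-$1$ face) is connected. This is immediate from the fact that $H_0$ of the appropriate link vanishes; see the references cited in Section~\ref{sub:Shellable-and-Cohen-Macaulay}. Connectivity of the dual graph is precisely strong connectivity of $\left\langle \mathcal{H}\right\rangle$, hence of $\mathcal{H}$. (For $d=1$ the complex is a set of points and the statement is vacuous or trivial; for $d\geq 2$ the above applies.) With strong connectivity of $\mathcal{H}$ established in either case, Theorem~\ref{thm:HypGen} gives that the exact transversals of $\mathcal{H}$ are pairwise disjoint.

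The main obstacle is not conceptual but a matter of bookkeeping the definitions: one must be careful that the general simplicial-complex definitions of shellable and Cohen--Macaulay (valid in the possibly infinite, locally finite setting) restrict to the statements used above, and that purity is genuinely needed to identify facets with edges. Once the dictionary between ``facet-connected in codimension $1$'' and ``$\mathcal{H}$ strongly connected'' is nailed down, the corollary follows with essentially no further work, the real content having been absorbed into Theorem~\ref{thm:HypGen} and the classical shellable/Cohen--Macaulay connectivity results.
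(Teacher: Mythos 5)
Your proposal is correct and takes essentially the same route as the paper, which derives the corollary from Theorem~\ref{thm:HypGen} together with the remark that the general simplicial-complex notions of pure shellability and Cohen--Macaulayness imply strong connectivity. You merely spell out the shelling-descent argument (as in Lemma~\ref{lem:EL-shellableStrConn}) and the codimension-one connectivity of Cohen--Macaulay complexes (as in Lemma~\ref{lem:LocalConnGivesStrongConn}) that the paper leaves implicit.
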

A $d$-uniform hypergraph is \emph{balanced} if there is a $d$-coloring
of the vertices such that no edge contains two vertices with the same
color. Balanced uniform hypergraphs generalize (the set of maximal
chains of) graded posets. There is some literature on balanced Cohen-Macaulay
simplicial complexes \cite[Chapter III.4 and references]{Stanley:1996}. 

The same proof as that of Theorem \ref{thm:Main2LocallyConn} yields:
\begin{thm}
\label{thm:HypGenBalanced}If $\mathcal{H}$ is a balanced strongly
connected $d$-uniform hypergraph, then the exact transversal sets
of $\mathcal{H}$ consist exactly of the color classes of vertices.
\end{thm}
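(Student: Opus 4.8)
The plan is to mimic the proof of Theorem~\ref{thm:Main2LocallyConn}, using Theorem~\ref{thm:HypGen} in place of Theorem~\ref{thm:MainThm-strong}. First I would record two elementary observations. (i) The $d$ color classes $C_{1},\dots,C_{d}$ partition the vertex set of $\mathcal{H}$. (ii) Each $C_{i}$ is itself an exact transversal of $\mathcal{H}$: since $\mathcal{H}$ is $d$-uniform and the coloring is proper, every edge $e$ has $d$ vertices carrying $d$ pairwise distinct colors, hence uses all $d$ colors and so meets each $C_{i}$ in exactly one vertex. In particular, as soon as $\mathcal{H}$ has at least one edge, every $C_{i}$ is nonempty, so the $C_{i}$ really do furnish a genuine partition and genuine exact transversals.

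Next I would let $T$ be an arbitrary exact transversal of $\mathcal{H}$. Because the $C_{i}$ partition the vertex set, $T$ has nonempty intersection with some $C_{i}$. Now $T$ and $C_{i}$ are both exact transversals sharing a vertex, so Theorem~\ref{thm:HypGen} (exact transversals of a strongly connected $d$-uniform hypergraph are pairwise disjoint) forces $T=C_{i}$. Combined with observation (ii), this shows that the exact transversals of $\mathcal{H}$ are exactly the color classes $C_{1},\dots,C_{d}$, which is the assertion of the theorem.

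I do not expect a real obstacle here; the proof is a direct transcription of the poset argument. The one point that deserves a sentence of care is observation (ii) — verifying that a color class meets every edge in precisely one vertex — which is exactly where $d$-uniformity and the properness of the $d$-coloring are used (properness plus $d$-uniformity guarantees each edge exhibits all $d$ colors). The strong-connectivity hypothesis itself is used only through the black-box appeal to Theorem~\ref{thm:HypGen}, just as Theorem~\ref{thm:Main2LocallyConn} used Theorem~\ref{thm:MainThm-strong}.
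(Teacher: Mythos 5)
Your proposal is correct and is essentially the paper's own argument: the paper proves Theorem \ref{thm:HypGenBalanced} by noting that the same proof as Theorem \ref{thm:Main2LocallyConn} applies, i.e., the color classes partition the vertices and are themselves exact transversals, so Theorem \ref{thm:HypGen} forces any exact transversal meeting a color class to equal it. Your extra remark verifying that each edge exhibits all $d$ colors is exactly the right point of care and matches the intended adaptation.
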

We comment that Theorems \ref{thm:HypGenBalanced} and \ref{thm:MainThm-weakgraded}
can be seen as a generalization of the fact that a connected bipartite
graph has a unique 2-coloring (up to renaming the colors).
\begin{example}
Consider the \emph{chessboard complex}: the simplicial complex with
vertex set the squares of an $a\times b$ chessboard ($a\leq b$),
and faces consisting of ``non-taking rook configurations'', i.e.,
sets of squares with no two on the same row or column. For any $a,b$
this simplicial complex is clearly $(a-1)$-dimensional. Let $\mathcal{C}_{a,b}$
be the hypergraph of maximal faces, that is, with edges consisting
of sets of $a$ non-taking rooks.

We notice that $\mathcal{C}_{a,b}$ is balanced, since every edge
contains a rook from each row. If $a<b$, then it is straightforward
to show by exchanging rooks that $\mathcal{C}_{a,b}$ is strongly
connected (this was essentially observed in \cite{Bjorner/Lovasz/Vrecica/Zivaljevic:1994}).
It follows that if $a<b$ then the exact transversals for $\mathcal{C}_{a,b}$
are in bijection with the rows of the chessboard, and consist of $b$
vertices corresponding to the squares of the given row.

In contrast, we notice that $\mathcal{C}_{a,a}$ does not have disjoint
exact transversals, since we have exact transversals in correspondence
with the rows of the chessboard, and (by transposition) additional
exact transversals in correspondence with the columns. 
\end{example}
\bibliographystyle{hamsplain}
\bibliography{6_Users_russw_Documents_Research_Master}

\end{document}